\title{Efficient Minimum Distance Estimation of Pareto Exponent from Top Income Shares}
\author[1]{Alexis Akira Toda\thanks{Email: \href{mailto:atoda@ucsd.edu}{atoda@ucsd.edu}.}}
\affil[1]{Department of Economics, University of California San Diego}
\author[2]{Yulong Wang\thanks{Email: \href{mailto:ywang402@maxwell.syr.edu}{ywang402@maxwell.syr.edu}.}}
\affil[2]{Department of Economics, Maxwell School, Syracuse University}
\numberwithin{equation}{section}
\numberwithin{thm}{section}
\begin{document}
\maketitle

\begin{abstract}

We propose an efficient estimation method for the income Pareto exponent when only certain top income shares are observable. Our estimator is based on the asymptotic theory of weighted sums of order statistics and the efficient minimum distance estimator. Simulations show that our estimator has excellent finite sample properties. We apply our estimation method to U.S.\ top income share data and find that the Pareto exponent has been stable at around 1.5 since 1985, suggesting that the rise in inequality during the last three decades is mainly driven by redistribution between the rich and poor, not among the rich.

\medskip

{\bf Keywords:} minimum distance estimator, order statistics, power law.

\medskip

{\bf JEL codes:} C46.
\end{abstract}

\section{Introduction}

It is well-known that the income distribution as well as many other size distributions of economic interest exhibit Pareto (power law) tails,\footnote{\cite{Pareto1896LaCourbe,Pareto1897Cours} discovered that the rank size distribution of income shows a straight line pattern on a log-log plot, which implies a power law. The power law in size distributions of economic variables has been documented for city size \citep{Auerbach1913,zipf1949,gabaix1999,giesen-zimmermann-suedekum2010,RozenfeldRybskiGabaixMakse2011}, firm size \citep{Axtell2001}, wealth \citep{KlassBihamLevyMalcaiSolomon2006,Vermeulen2018}, and consumption \citep{TodaWalsh2015JPE,Toda2017MD}, among others. See \cite{gabaix2009} for an introduction to power law.} meaning that the tail probability $P(X>x)$ decays like a power function $x^{-\alpha}$ for large $x$, where $\alpha>0$ is called the Pareto exponent.\footnote{A more precise statement is that $P(X>x)=x^{-\alpha}\ell(x)$, where $\ell$ is a slowly varying function at infinity. See \cite{BinghamGoldieTeugels1987} for the properties of slowly varying functions and related concepts.} Oftentimes, knowing the Pareto exponent $\alpha$ is of considerable practical interest because it determines the shape of the income distribution for the rich and hence income inequality. As a motivating example, consider the theory of optimal taxation. If the government's objective is to maximize the total tax revenue, then \cite{Saez2001} shows that the optimal income tax rate is $\tau=\frac{1}{1+\alpha e}$, where $\alpha$ is the income Pareto exponent and $e$ is the elasticity of income in the top bracket with respect to the tax rate. If we set $e=0.3$ as estimated by \cite{PikettySaezStancheva2014} and $\alpha=1.5$--$3$ as is often reported,\footnote{See \citet[Table 13A.23]{AtkinsonPiketty2010} for a list of income Pareto exponents across time and countries estimated from top income share data. Studies that use micro data such as \cite{Toda2012JEBO} and \cite{IbragimovIbragimov2018} also find similar numbers.} then the optimal tax rate ranges from 70\% when $\alpha=1.5$ and 53\% when $\alpha=3$. Clearly, the knowledge of the Pareto exponent is important for policy design.

When individual data on income is available, it is relatively straightforward to estimate and conduct inference on the Pareto exponent, either by maximum likelihood \citep{hill1975}, log rank regressions \citep{gabaix-ibragimov2011}, fixed-$k$ asymptotics \citep{MullerWang2017}, or other methods. Even if individual data is not available, if we have binned data we can still estimate the Pareto exponent by eyeballing \citep{Pareto1897Cours} or maximum likelihood \citep{VirkarClauset2014}. However, in practice it is often the case (especially for administrative data) that only some top income shares are reported and individual data are not available. A typical example is Table \ref{t:TopSharesUS} below, which summarizes the U.S.\ household income distribution.\footnote{\label{fn:PS}These numbers are taken from Table A.3 (top income shares including capital gains) of the updated spreadsheet for \cite{piketty-saez2003}, which can be downloaded at \url{https://eml.berkeley.edu/~saez/TabFig2017prel.xls}.} Such income data in the form of tabulations are quite common, including the World Inequality Database.\footnote{\label{fn:WID}\url{https://wid.world/}}

\begin{table}[!htb]
\centering
\caption{U.S.\ top income shares (\%).}\label{t:TopSharesUS}
\begin{tabular}{ccccccc}
\toprule
Year & \multicolumn{6}{c}{Top income percentiles} \\
& 0.01 & 0.1 & 0.5 & 1 & 5 & 10\\
\midrule
1917 & 3.37 & 8.40 & 14.34 & 17.74 & 30.64 & 40.51 \\
\vdots & \vdots & \vdots & \vdots & \vdots & \vdots & \vdots \\
2017& 4.95 & 10.43 & 17.16 & 21.47 & 38.14 & 50.14 \\
\bottomrule
\end{tabular}
\end{table}

Existing studies of such income share data typically rely on some parametric assumption of the \emph{whole} distribution. Given a parametric density, the income shares can be expressed as functions of the unknown parameters and then be estimated by moment-based estimators. Statistical inference is constructed by either using the asymptotic normality or bootstrapping. 
For example, \cite{McDonald1984} and \cite{kleiber-kotz2003} propose to use the generalized beta type-II distribution (GB2) to approximate the whole income distribution.\footnote{However, \cite{Toda2012JEBO,Toda2017MD} show that GB2 is outperformed by the double Pareto-lognormal (dPlN) distribution as a model of income and consumption distributions.} Under this assumption, \cite{ChotikapanichGriffithsRao2007}, \cite{ChotikapanichGriffithsRaoValencia2012}, and \cite{HajargashtGriffithsBriceRaoChotikapanich2012} develop moment-based estimators and inference methods for grouped mean and share data, and \cite{Chen2018} further constructs a unified framework that allows for general form of grouped data. These methods all focus on the mid-sample moments, which can be expressed as known functions of the parameters of the GB2 distribution.\footnote{\label{fn:chen2018}In the application section, \cite{Chen2018} uses Chinese data on ten deciles income shares and U.S.\ data on a series of quintiles, the top 5\% shares, and sample quantiles. These are all mid-sample moments relative to the top income shares considered in the present paper.}

However, the parametric assumption on the whole distribution may lead to a substantial misspecification error when the object of interest is in the tail. This is because tail properties such as very large quantiles are typically in a large scale, and hence a small misspecification in mid-sample can be amplified by a large factor in the tail. For example, the standard normal distribution and the Student-$t$ distribution with degree of freedom 20 share almost the same shape in mid-sample but exhibit substantially different top quantiles. Such misspecification is documented by \cite{Brzezinski2013} in an extensive simulation study.\footnote{\label{fn:distfree}\cite{BeachDavidson1983} and \cite{BeachRichmond1985} propose some distribution-free methods for estimation and inference about the Lorenz curves. Their methods require estimating the population mean, variance, and some other moments, which is not feasible in our situation. Furthermore, their focus is more on the middle sample instead of the tail.}

In addition to the potential misspecification in mid-sample, there is another source of bias when studying tail related objects, which is the dependence among large order statistics. Suppose we are interested in the right tail of the underlying distribution. Then essentially only the largest order statistics are informative. Even if the observations are independent, the largest order statistics are not. Such dependence may incur a large misspecification error again if it is ignored, especially when the very top income share such as 0.01\% is considered. For example, think of the population size as $10^5$. The top 0.01\% share involves only the ten largest order statistics, whose distribution has to be jointly modeled to capture the dependence.

In this paper, we focus only on the top income shares and propose an efficient estimation method for the Pareto exponent. Compared with existing methods, the new estimator takes into consideration of the dependence among large order statistics and is robust to misspecification in mid-sample. In particular, our method is based on the following observations. By definition, top income shares are the ratios between the sum of order statistics for some top percentile and total income. Assuming that the upper tail of the income distribution is Pareto, we derive the joint asymptotic distribution of normalized top income shares using the results on the weighted sums of order statistics due to \cite{Stigler1974}. From this result, we define the classical minimum distance (CMD) estimator \citep{Chiang1956,Ferguson1958} and derive its asymptotic properties.

More specifically, we typically cannot identify the shape of the underlying distribution without observing individual data. However, if we assume the sample size is large enough (but not necessarily known) and the underlying distribution has a Pareto upper tail, we can show that the top shares are jointly asymptotically Gaussian with the mean vector and the variance-covariance matrix being characterized by the Pareto exponent and the scale parameter. Since the scale parameter is not identified given only the shares, we eliminate it by imposing scale invariance and considering a self-normalized statistic, whose distribution is still jointly normal but now fully characterized by the Pareto exponent only. Thus, the problem is asymptotically equivalent to estimating a single parameter in a joint normal distribution using a single random draw from it. The efficient solution is then to consider the continuously updated minimum distance estimator (CUMDE). As we show in simulations, this estimator has excellent finite sample properties when the model is correctly specified.

We note that the Pareto assumption is required for the tail only instead of the whole income distribution, which is why our method is robust to misspecification in mid-sample. In particular, when the data generating process is not exactly Pareto, our estimator still performs well when we only use small enough top percentiles such as the top 1\% and the sample size is large enough, which is typically the case for income share data based on tax returns (where the number of households is in the order of a million). 

We apply our new method to estimate the income Pareto exponent in U.S.\ and France. In U.S., we estimate that the income Pareto exponent has decreased from about 2.2 in 1975 to about 1.6 in 1985, which has remained relatively stable since then at around 1.5 with a conservative 95\% confidence interval of length no more than 0.1. This finding is in stark contrast to other inequality measures such as the top 1\% income share, which has increased from about 10\% in 1985 to 20\% at present, and suggests that the rise in inequality during the last three decades is mainly driven by redistribution between the rich and poor, not among the rich. In France, we find that the Pareto exponent is stable at around 2 postwar.


\section{Weighted sums of order statistics}\label{sec:orderstat}
In this section we derive the asymptotic distribution of the weighted sums of order statistics of a Pareto distribution, which we subsequently use to construct the estimator of the Pareto exponent. 

Let $\set{Y_i}_{i=1}^n$ be independent and identically distributed (i.i.d.) copies of a positive random variable $Y$ with cumulative distribution function (CDF) $F(y)$ and density $f(y)=F'(y)$. Let
$$Y_{(1)}\ge \dots \ge Y_{(n)}$$
denote the order statistics. Following \cite{Stigler1974}, consider the weighted sum
$$L_n=\frac{1}{n}\sum_{i=1}^n J\left(\frac{i}{n+1}\right)Y_{(n-i+1)},$$
where $J:[0,1]\to \R$ is a function that is bounded and continuous almost everywhere with respect to the Lebesgue measure. When
\begin{equation}
J(x)=1[1-q<x\le 1-p]\label{eq:defJ}
\end{equation}
for some $0<p<q\le 1$, $L_n$ can be interpreted as the sum of $Y_{(i)}$'s between the top $100p$ and $100q$ percentiles, divided by the sample size $n$.

The following lemma shows that $L_n$ is asymptotically normal.

\begin{lem}\label{lem:Stigler}
Let $J$ be as in \eqref{eq:defJ}. Then
$$\sqrt{n}(L_n-\mu(J,F))\dto N(0,\sigma^2(J,F)),$$
where
\begin{subequations}\label{eq:momentJF}
\begin{align}
\mu(J,F)&=\int_0^1 J(x)F^{-1}(x)\diff x,\label{eq:muJF}\\
\sigma^2(J,F)&=\int_0^1\int_0^1 \frac{J(x_1)J(x_2)}{f(F^{-1}(x_1))f(F^{-1}(x_2))}(\min\set{x_1,x_2}-x_1x_2)\diff x_1\diff x_2.\label{eq:sigJF}
\end{align}
\end{subequations}
\end{lem}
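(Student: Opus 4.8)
The plan is to recognize $L_n$ as a classical $L$-statistic and then invoke the asymptotic normality theory for linear functions of order statistics due to \cite{Stigler1974}. First I would reindex the descending order statistics in ascending order by setting $X_{(i)}:=Y_{(n-i+1)}$, so that $X_{(1)}\le\dots\le X_{(n)}$ and
$$L_n=\frac{1}{n}\sum_{i=1}^n J\!\left(\frac{i}{n+1}\right)X_{(i)}.$$
This is precisely the form of a linear combination of order statistics with weight-generating function $J$ that Stigler's central limit theorem governs, so the task reduces to checking his hypotheses and then matching the limiting mean and variance to \eqref{eq:momentJF}.

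Next I would verify those hypotheses. The weight $J$ in \eqref{eq:defJ} is bounded and has exactly two discontinuities, at $x=1-q$ and $x=1-p$; I would confirm that neither is a discontinuity point of $F^{-1}$, which is immediate since $F$ admits a density $f$ and is strictly increasing on the relevant range, so $F^{-1}$ is continuous there. The remaining requirement is an integrability condition ensuring finiteness of the limiting variance. Here the crucial observation is that $J$ is supported on $(1-q,1-p]$ with $p>0$, so its support is bounded away from the upper endpoint $x=1$; hence the statistic never weights the extreme upper order statistics, and the integrals below converge even when $F$ has a heavy, infinite-variance tail such as a Pareto tail. In other words, the trimming at level $p>0$ makes the moment condition automatic. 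Stigler's theorem then delivers $\sqrt{n}(L_n-\mu(J,F))\dto N(0,\sigma^2(J,F))$ with mean $\mu(J,F)=\int_0^1 J(x)F^{-1}(x)\,\diff x$, which is \eqref{eq:muJF}.

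It then remains to cast the variance in the stated form. On the data scale Stigler's variance reads
$$\sigma^2(J,F)=\iint J(F(s))\,J(F(t))\,\bigl[F(\min\set{s,t})-F(s)F(t)\bigr]\,\diff s\,\diff t.$$
Substituting $x_1=F(s)$ and $x_2=F(t)$, so that $\diff s=\diff F^{-1}(x_1)=\diff x_1/f(F^{-1}(x_1))$ and likewise for $x_2$, turns the kernel $F(\min\set{s,t})-F(s)F(t)$ into $\min\set{x_1,x_2}-x_1x_2$ and produces the factors $1/f(F^{-1}(x_i))$, yielding exactly \eqref{eq:sigJF}. The mechanism behind this kernel is the Bahadur representation of the empirical quantile process: one expects
$$\sqrt{n}(L_n-\mu(J,F))=-\int_0^1\frac{J(x)}{f(F^{-1}(x))}B_n(x)\,\diff x+o_p(1),$$
where $B_n$ is the uniform empirical process converging weakly to a Brownian bridge $B$ with covariance $\min\set{x_1,x_2}-x_1x_2$, which is precisely the source of the covariance kernel above.

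The main obstacle I anticipate is the discontinuity of the weight function: the cleanest statements of Stigler's theorem assume a smooth $J$, whereas here $J$ is an indicator with jumps. The care required is to use the version of the theory (equivalently, the empirical-process argument sketched above) that tolerates jumps in $J$ at points where $F^{-1}$ is continuous, and to confirm that the jump at the upper endpoint $x=1-p$ is harmless precisely because $p>0$ keeps it away from the tail where $1/f(F^{-1}(x))$ diverges. Were one instead to take $p=0$ in order to capture the very top share, this argument would break down and extreme-value rather than $L$-statistic asymptotics would be needed; but under the stated hypothesis $0<p<q\le 1$ the trimming resolves the difficulty.
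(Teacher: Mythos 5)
Your proposal is correct and follows essentially the same route as the paper: both reduce $L_n$ to an $L$-statistic covered by Stigler's (1974) Theorem 5, use the change of variable $x=F(y)$ to obtain \eqref{eq:muJF} and \eqref{eq:sigJF}, and rely on the key observation that $J$ vanishes above $1-p$ (trimming at $p>0$) so the extreme order statistics are excluded and the moment conditions hold despite the heavy tail. The additional detail you supply on verifying Stigler's hypotheses and the Brownian-bridge heuristic is a faithful elaboration of the paper's one-line argument rather than a different approach.
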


\begin{proof}
The statement follows from \citet[Theorem 5]{Stigler1974} and the change of variable $x=F(y)$. Note that $J(x)=1[1-q<x\le 1-p]$ implies $J(x)=0$ for $x>1-p$.
\end{proof}

In the remainder of the paper, we assume that $Y$ is Pareto distributed with Pareto exponent $\alpha>1$ and minimum size $c>0$, so $F(y)=1-(y/c)^{-\alpha}$ for $y\ge c$. The Pareto exponent $\alpha$ captures the shape and the minimum size $c$ characterizes the scale. By simple algebra, we obtain
\begin{subequations}
\begin{align}
f(y)&=F'(y)=\alpha c^\alpha y^{-\alpha-1},\label{eq:fPareto}\\
F^{-1}(x)&=c(1-x)^{-1/\alpha},\label{eq:FinvPareto}\\
f(F^{-1}(x))&=\frac{\alpha}{c}(1-x)^{1+1/\alpha}.\label{eq:fFinvPareto}
\end{align}
\end{subequations}
When $Y$ is Pareto distributed, we can explicitly compute the moments in Lemma \ref{lem:Stigler} as follows.

\begin{lem}\label{lem:StiglerPareto}
Let $J$ be as in \eqref{eq:defJ} and $F$ be the Pareto CDF with exponent $\alpha>1$ and minimum size $c$. Letting $\xi=1/\alpha<1$, we have
\begin{subequations}\label{eq:momentpq}
\begin{align}
&\mu(J,F)=\mu(p,q)\coloneqq c\frac{q^{1-\xi}-p^{1-\xi}}{1-\xi},\label{eq:mupq}\\
&\sigma^2(J,F)=\sigma^2(p,q)\notag\\
&\coloneqq \frac{2c^2\xi^2}{1-\xi}\left(\frac{q^{1-2\xi}-p^{1-2\xi}}{1-2\xi}+p^{1-\xi}\frac{q^{-\xi}-p^{-\xi}}{\xi}+\frac{2p^{1-\xi}q^{1-\xi}-p^{2-2\xi}-q^{2-2\xi}}{2-2\xi}\right)\label{eq:sigpq},
\end{align}
\end{subequations}
where $\frac{q^{1-2\xi}-p^{1-2\xi}}{1-2\xi}$ is interpreted as $\log \frac{q}{p}$ if $\xi=1/2$.
\end{lem}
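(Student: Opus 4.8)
The plan is to substitute the Pareto-specific expressions \eqref{eq:FinvPareto} and \eqref{eq:fFinvPareto} into the general formulas of Lemma \ref{lem:Stigler} and evaluate the resulting integrals directly; because $J$ is an indicator, the domains collapse to the interval $[1-q,1-p]$ for $\mu$ and its square for $\sigma^2$. For the mean I would plug $F^{-1}(x)=c(1-x)^{-\xi}$ into \eqref{eq:muJF}, giving $\mu(J,F)=\int_{1-q}^{1-p} c(1-x)^{-\xi}\diff x$. The change of variable $u=1-x$ turns this into $c\int_p^q u^{-\xi}\diff u$, which integrates to $c\frac{q^{1-\xi}-p^{1-\xi}}{1-\xi}$ since $\xi<1$, yielding \eqref{eq:mupq}.

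For the variance the key preliminary step is to rewrite the reciprocal density via \eqref{eq:fFinvPareto} as $1/f(F^{-1}(x))=c\xi(1-x)^{-(1+\xi)}$. Substituting into \eqref{eq:sigJF} and applying $u_i=1-x_i$ in both coordinates, I would use the identity $\min\set{x_1,x_2}-x_1x_2=\min\set{u_1,u_2}-u_1u_2$ (which follows from $u_1+u_2-\max\set{u_1,u_2}=\min\set{u_1,u_2}$) to obtain
$$\sigma^2(J,F)=c^2\xi^2\int_p^q\int_p^q (u_1u_2)^{-(1+\xi)}\big(\min\set{u_1,u_2}-u_1u_2\big)\diff u_1\diff u_2.$$
I would then split the integrand into the product part $-u_1u_2$ and the $\min$ part. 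The product part factorizes into $-c^2\xi^2\big(\int_p^q u^{-\xi}\diff u\big)^2$, whose expansion reproduces the last term of \eqref{eq:sigpq} once $\frac{2c^2\xi^2}{1-\xi}$ is factored out. For the $\min$ part I would exploit the symmetry of the integrand to restrict to $u_1\le u_2$ and multiply by two, reducing it to $2\int_p^q u_2^{-(1+\xi)}\big(\int_p^{u_2} u_1^{-\xi}\diff u_1\big)\diff u_2$; evaluating the inner and then the outer integral produces the first two terms of \eqref{eq:sigpq}. Collecting the three pieces gives the stated formula.

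There is no genuine analytical difficulty here, since every integral is elementary. The only points requiring care are the bookkeeping when matching the expanded square $\big(q^{1-\xi}-p^{1-\xi}\big)^2=q^{2-2\xi}-2p^{1-\xi}q^{1-\xi}+p^{2-2\xi}$ against the third term in \eqref{eq:sigpq}, and the degenerate exponent when $2\xi=1$: the antiderivative of $u^{-2\xi}$ then becomes $\log u$ rather than $\frac{u^{1-2\xi}}{1-2\xi}$, which is exactly the $\xi=1/2$ special case flagged in the statement.
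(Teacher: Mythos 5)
Your proof is correct and takes essentially the same route as the paper: substitute the Pareto expressions \eqref{eq:FinvPareto} and \eqref{eq:fFinvPareto} into Lemma \ref{lem:Stigler}, change variables $u=1-x$, and evaluate elementary integrals, with the $\xi=1/2$ case handled exactly as flagged. The only difference is bookkeeping --- the paper first restricts to the triangle $x_1\le x_2$ using $\min\{x_1,x_2\}-x_1x_2=x_1(1-x_2)$ and then substitutes, whereas you substitute first (via the invariance of $\min\{x_1,x_2\}-x_1x_2$ under $x\mapsto 1-x$) and split off the factorizable $-u_1u_2$ piece, which directly produces the perfect-square third term; both organizations yield the same three terms.
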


Next, we consider the joint distribution of the sums of $Y_{(i)}$'s over some top percentile groups. Suppose that there are $K$ groups indexed by $k=1\dots,K$, and the $k$-th group corresponds to the top $p_k$ to $p_{k+1}$ percentile, where $0<p_1<\dots<p_K<p_{K+1}\le 1$. Define
\begin{equation}
\bar{Y}_k=\frac{1}{n}\sum_{i=\floor{np_k}+1}^{\floor{np_{k+1}}}Y_{(i)},\label{eq:Ybar}
\end{equation}
where $\floor{x}$ denotes the largest integer not exceeding $x$.\footnote{\label{fn:Bin0}We exclude the largest $\floor{np_1}$ order statistics since their average does not satisfy the assumptions of Central Limit Theorem when $\alpha<2$.} By Lemmas \ref{lem:Stigler} and \ref{lem:StiglerPareto}, we have
$$\sqrt{n}(\bar{Y}_k-\mu_k)\dto N(0,\sigma_k^2),$$
where $\mu_k=\mu(p_k,p_{k+1})$ and $\sigma_k^2=\sigma^2(p_k,p_{k+1})$ are given by \eqref{eq:mupq} and \eqref{eq:sigpq}, respectively. Let $\bar{Y}=(\bar{Y}_1,\dots,\bar{Y}_K)^\top$ and $\mu=(\mu_1,\dots,\mu_K)^\top$. Then by the Cram\'er-Wold device, it follows that
\begin{equation}
\sqrt{n}(\bar{Y}-\mu)\dto N(0,\Sigma),\label{eq:multnormal}
\end{equation}
where $\Sigma$ is some variance matrix with $\Sigma_{kk}=\sigma_k^2$. The following lemma gives an explicit formula for $\Sigma$.

\begin{lem}\label{lem:Sigma}
The variance matrix $\Sigma$ in \eqref{eq:multnormal} is symmetric and
\begin{equation}
\Sigma_{jk}=\begin{cases}
\sigma_k^2=\sigma^2(p_k,p_{k+1}), & (j=k)\\
-c^2\xi^2\frac{p_{j+1}^{1-\xi}-p_j^{1-\xi}}{1-\xi}\left(\frac{p_{k+1}^{-\xi}-p_k^{-\xi}}{\xi}+\frac{p_{k+1}^{1-\xi}-p_k^{1-\xi}}{1-\xi}\right). & (j<k)
\end{cases}\label{eq:Sigjk}
\end{equation}
Furthermore, $\Sigma$ is positive definite.
\end{lem}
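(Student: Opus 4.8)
The diagonal entries $\Sigma_{kk}=\sigma^2(p_k,p_{k+1})$ are already supplied by Lemmas \ref{lem:Stigler} and \ref{lem:StiglerPareto}, so the three remaining tasks are to identify the off-diagonal entries, confirm symmetry, and establish positive definiteness. My plan for the first two is to read the covariances off the same Cram\'er--Wold argument that produced \eqref{eq:multnormal}. Each $\bar Y_k$ is the weighted sum of order statistics with weight $J_k(x)=1[1-p_{k+1}<x\le 1-p_k]$, so for any $a=(a_1,\dots,a_K)^\top$ the combination $a^\top\bar Y$ is the weighted sum with the step function $J=\sum_k a_kJ_k$, which is bounded and continuous almost everywhere. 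Applying Stigler's theorem to $J$ gives $a^\top\Sigma a=\sigma^2(J,F)$, and since the right-hand side of \eqref{eq:sigJF} is a symmetric quadratic form in $J$, expanding and matching the coefficient of $a_ja_k$ yields
\begin{equation*}
\Sigma_{jk}=\int_0^1\int_0^1\frac{J_j(x_1)J_k(x_2)}{f(F^{-1}(x_1))f(F^{-1}(x_2))}(\min\set{x_1,x_2}-x_1x_2)\diff x_1\diff x_2,
\end{equation*}
which is manifestly symmetric in $(j,k)$ because the kernel is.

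To evaluate this in the Pareto case I would substitute \eqref{eq:fFinvPareto}, so that $1/f(F^{-1}(x))=c\xi(1-x)^{-1-\xi}$, and then change variables $u=1-x_1$, $v=1-x_2$. This sends the indicators to $u\in[p_j,p_{j+1})$, $v\in[p_k,p_{k+1})$ and, by a one-line computation, turns the covariance kernel into the same form $\min\set{u,v}-uv$. For $j<k$ the ordering $p_{j+1}\le p_k$ forces $u\le v$ throughout the region, so $\min\set{u,v}=u$ and the integrand factorizes as $u^{-\xi}\cdot v^{-1-\xi}(1-v)$. The double integral then splits into a product of two elementary one-dimensional integrals, which evaluate to $\frac{p_{j+1}^{1-\xi}-p_j^{1-\xi}}{1-\xi}$ and $-\left(\frac{p_{k+1}^{-\xi}-p_k^{-\xi}}{\xi}+\frac{p_{k+1}^{1-\xi}-p_k^{1-\xi}}{1-\xi}\right)$, reproducing \eqref{eq:Sigjk} once the prefactor $c^2\xi^2$ is collected.

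The main obstacle is positive definiteness, since a limiting covariance matrix is a priori only positive semidefinite. The plan is to exploit the fact that the Brownian-bridge kernel is a Gram kernel. Writing $g(x)=J(x)/f(F^{-1}(x))$ with $J=\sum_k a_kJ_k$, and using the identity $\min\set{x_1,x_2}-x_1x_2=\int_0^1(1[t<x_1]-x_1)(1[t<x_2]-x_2)\diff t$, Fubini's theorem gives
\begin{equation*}
a^\top\Sigma a=\int_0^1\Phi(t)^2\diff t\ge 0,\qquad \Phi(t)=\int_t^1 g(x)\diff x-\int_0^1 xg(x)\diff x.
\end{equation*}
Equality forces $\Phi\equiv 0$; but then $\int_t^1 g(x)\diff x$ is constant in $t$, and since it is absolutely continuous this yields $g=0$ almost everywhere. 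Because $1/f(F^{-1}(x))>0$ and the intervals $[p_k,p_{k+1})$ are disjoint, $g=0$ a.e.\ is equivalent to $a=0$, so $a^\top\Sigma a>0$ for $a\neq 0$.

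The one technical point to discharge in the last step is that $g\in L^2[0,1]$, which is what makes the Fubini swap and all the integrals legitimate. This holds precisely because $p_1>0$ keeps the support of $J$ contained in $(1-p_{K+1},\,1-p_1]$, bounded away from $x=1$ where $(1-x)^{-1-\xi}$ blows up; hence $g$ is bounded and square-integrable. I expect the covariance computation to be entirely routine and the Gram-kernel representation to be the only genuinely delicate ingredient.
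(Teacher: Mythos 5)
Your proposal is correct and reaches both conclusions of the lemma. The derivation of the off-diagonal entries is essentially the paper's argument: the paper polarizes by computing the asymptotic variance of $\bar{Y}_j+\bar{Y}_k$ via Lemma \ref{lem:Stigler} applied to the sum of two indicators, which is exactly the $a=e_j+e_k$ instance of your Cram\'er--Wold expansion; both then land on the same factorized integral $c^2\xi^2\int u^{-\xi}\,du\int v^{-1-\xi}(1-v)\,dv$ over $[p_j,p_{j+1})\times[p_k,p_{k+1})$ after using $p_{j+1}\le p_k$ to resolve the minimum. Where you genuinely diverge is positive definiteness. The paper orders the variables, writes $z^\top\Sigma z=2\int\int_{v_2\le v_1}\phi(v_1)\phi(v_2)\frac{1-v_1}{v_1}$, introduces the primitive $\Phi=\int\phi$ with $\Phi(p_1)=0$, and obtains $z^\top\Sigma z=\int_{p_1}^1\Phi(v)^2/v^2\,dv$ by integration by parts. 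You instead factor the Brownian-bridge kernel as $\min\{x_1,x_2\}-x_1x_2=\int_0^1(1[t<x_1]-x_1)(1[t<x_2]-x_2)\,dt$ and apply Fubini to exhibit $a^\top\Sigma a$ directly as $\int_0^1\Phi(t)^2\,dt$ for a different $\Phi$. Both routes terminate identically (the integral of a square of a continuous function vanishes only if the function does, and absolute continuity recovers $g=0$ a.e., hence $a=0$ from disjoint supports). Your Gram-kernel route is more transparent and avoids the integration-by-parts bookkeeping, and it generalizes immediately to arbitrary weight functions $J$; the paper's computation is more self-contained in that it never leaves the $(v_1,v_2)$-plane. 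Your closing remark that boundedness of $g$ (from $p_1>0$) legitimizes the Fubini swap is the right technical point and is implicitly used by the paper as well.
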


\section{Minimum distance estimator}\label{sec:CUMDE}

In practice, the income distribution is often presented as a tabulation of top income shares as in Table \ref{t:TopSharesUS} and micro data is not available. In this case the researcher is forced to conduct inference on the Pareto exponent $\alpha$ based on the top income shares of the given top percentiles, for example
\begin{equation}
p=(p_1,p_2,p_3,p_4,p_5,p_6)=\frac{1}{100}(0.01,0.1,0.5,1,5,10) \label{eq:TopPercent}
\end{equation}
as in Table \ref{t:TopSharesUS}. If $Y$ is distributed as Pareto with exponent $\alpha>1$ and minimum size $c>0$, using $F(y)=1-(y/c)^{-\alpha}$, the population top $p$ percentile is
$$1-(y/c)^{-\alpha}=1-p\iff y=cp^{-1/\alpha}.$$
Using \eqref{eq:fPareto}, the total income held by the population top $p$ percentile is
$$Y(p)\coloneqq \int_{cp^{-1/\alpha}}^\infty y\alpha c^\alpha y^{-\alpha-1}\diff y=c\frac{\alpha}{\alpha-1}p^{1-1/\alpha}.$$
Therefore the population top $p$ income share is
$$S(p)\coloneqq Y(p)/Y(1)=p^{1-1/\alpha},$$
which depends only on $p$ and $\alpha$. If $Y$ is Pareto only for the upper tail, a similar calculation yields
\begin{equation}
S(p)/S(q)=(p/q)^{1-1/\alpha}\iff \alpha=\frac{1}{1-\frac{\log (S(q)/S(p))}{\log (q/p)}}\label{eq:alphaSpq}
\end{equation}
for $0<p<q\ll 1$. \citet[Table 13A.23]{AtkinsonPiketty2010} and \citet[Figure 3]{AokiNirei2017} estimate the income Pareto exponents from \eqref{eq:alphaSpq} using $p=0.1\%$ and $q=1\%$.\footnote{\cite{Kuznets1953} and \cite{FeenbergPoterba1993} use similar methods to estimate the Pareto exponent.} 
A natural question is whether such a method can be statistically justified for the tabulation data as in Table \ref{t:TopSharesUS}. In this section, we derive such an estimator and discuss its asymptotic properties.

\subsection{Asymptotic theory}
Let $\set{Y_i}_{i=1}^n$ be the (unobserved) income data and $Y_{(1)}\ge \dots \ge Y_{(n)}$ the order statistics. Let $K\ge 2$ and suppose that some top percentiles $0<p_1<\dots<p_K<p_{K+1}\le 1$ and the corresponding sample top income shares
$$S_k=\frac{\sum_{i=1}^{\floor{np_k}}Y_{(i)}}{\sum_{i=1}^nY_{(i)}},\quad  k=1,\dots,K+1,$$
are given. Suppose that $p_{K+1}$ is small enough such that for $i\le \floor{np_{K+1}}$, we may assume that $Y_{(i)}$ are realizations from a Pareto distribution with exponent $\alpha$ and minimum size $c$. To construct an estimator of $\alpha$ based only on $\set{S_k}$, we consider the vector of self-normalized non-overlapping top income shares defined by
\begin{equation}
\bar{s}=(\bar{s}_1,\dots,\bar{s}_{K-1})^\top\coloneqq \left(\frac{S_2-S_1}{S_{K+1}-S_K},\dots,\frac{S_K-S_{K-1}}{S_{K+1}-S_K}\right)^\top.\label{eq:normshares}
\end{equation}
The following proposition shows that $\bar{s}$ is asymptotically normal.

\begin{prop}\label{prop:normshares}
Let $r_k=\mu_k/\mu_K$, where $\mu_k=\mu(p_k,p_{k+1})$ is given by \eqref{eq:mupq}. Define the $(K-1)$-vector $r=(r_1,\dots,r_{K-1})^\top$ and $(K-1)\times K$ matrix $H=\begin{bmatrix}I_{K-1} & -r\end{bmatrix}/\mu_K$. Then
$$\sqrt{n}(\bar{s}-r)\dto N(0,H\Sigma H^\top).$$
The variance matrix $\Omega(\alpha)\coloneqq H\Sigma H^\top$ depends only on $\alpha$ and is positive definite.
\end{prop}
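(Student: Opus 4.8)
The plan is to apply the delta method to the joint convergence already established in \eqref{eq:multnormal}. First I would observe that the self-normalized shares $\bar{s}$ are smooth functions of the non-overlapping block sums $\bar{Y}=(\bar{Y}_1,\dots,\bar{Y}_K)^\top$ defined in \eqref{eq:Ybar}. Indeed, since $S_{k+1}-S_k$ is the top income held between the $p_k$ and $p_{k+1}$ percentiles divided by total income, and the denominators cancel in the ratio, each component $\bar{s}_j=(S_{j+1}-S_j)/(S_{K+1}-S_K)$ equals $\bar{Y}_j/\bar{Y}_K$ after dividing numerator and denominator by $n$. Thus $\bar{s}=g(\bar{Y})$, where $g:\R^K\to\R^{K-1}$ is the map $g(y_1,\dots,y_K)=(y_1/y_K,\dots,y_{K-1}/y_K)^\top$.

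Next I would compute the population limit and the Jacobian of $g$. Since $\sqrt{n}(\bar{Y}-\mu)\dto N(0,\Sigma)$ by \eqref{eq:multnormal}, the continuous mapping theorem gives $\bar{s}\pto g(\mu)=(\mu_1/\mu_K,\dots,\mu_{K-1}/\mu_K)^\top=r$, so $r$ is indeed the correct centering. Evaluating the Jacobian $Dg$ at $\mu$, the derivative of $y_j/y_K$ with respect to $y_k$ is $\delta_{jk}/y_K$ for $k<K$ and $-y_j/y_K^2$ for $k=K$; evaluated at $y=\mu$ this gives exactly the matrix $H=\begin{bmatrix}I_{K-1} & -r\end{bmatrix}/\mu_K$, since $\mu_j/\mu_K^2=r_j/\mu_K$. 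The delta method then yields $\sqrt{n}(\bar{s}-r)\dto N(0,H\Sigma H^\top)$, which is the claimed convergence with $\Omega(\alpha)=H\Sigma H^\top$.

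It remains to verify the two structural claims about $\Omega(\alpha)$. For scale invariance, I would note from Lemma \ref{lem:StiglerPareto} and Lemma \ref{lem:Sigma} that both $\mu_k$ and every entry of $\Sigma$ are proportional to $c$ and $c^2$ respectively, so the ratios $r_k=\mu_k/\mu_K$ are free of $c$, and in $H\Sigma H^\top$ the factor $\mu_K^{-2}$ in $HH^\top$ cancels the overall $c^2$ in $\Sigma$. Hence $\Omega$ depends only on $\alpha$ (equivalently $\xi=1/\alpha$), as asserted. Positive definiteness follows from the fact that $\Sigma$ is positive definite (Lemma \ref{lem:Sigma}) together with $H$ having full row rank $K-1$: the block $I_{K-1}/\mu_K$ is nonsingular, so $H$ has rank $K-1$, and for any nonzero $v\in\R^{K-1}$ we have $H^\top v\ne 0$, whence $v^\top\Omega v=(H^\top v)^\top\Sigma(H^\top v)>0$.

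The main obstacle is essentially bookkeeping rather than a deep difficulty: I must be careful that the floor functions $\floor{np_k}$ in the definition of $\bar{Y}_k$ do not disturb the asymptotics, but this is already absorbed into \eqref{eq:multnormal}, and I must confirm that the denominators $\bar{Y}_K$ and $\mu_K$ are bounded away from zero so that $g$ is continuously differentiable in a neighborhood of $\mu$ — this holds because $\mu_K=\mu(p_K,p_{K+1})>0$ for a Pareto tail with $\alpha>1$. The only genuine verification requiring care is checking that the Jacobian evaluated at $\mu$ coincides exactly with the stated $H$, which is the routine computation sketched above.
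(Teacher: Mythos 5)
Your proposal is correct and takes essentially the same route as the paper: the paper's proof is just the delta method carried out by hand, linearizing $\bar{Y}_k/\bar{Y}_K$ around $(\mu_k,\mu_K)$ via Slutsky to obtain the limit $\frac{1}{\mu_K}Z_k-\frac{\mu_k}{\mu_K^2}Z_K$, which is exactly your Jacobian $H$ applied to $Z\sim N(0,\Sigma)$. The scale-invariance and positive-definiteness arguments also match the paper's.
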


Based on Proposition \ref{prop:normshares}, it is natural to consider the classical minimum distance (CMD) estimator \citep{Chiang1956,Ferguson1958}
\begin{equation}
\widehat{\alpha}=\argmin_{\alpha\in A}(r(\alpha)-\bar{s})^\top \widehat{W} (r(\alpha)-\bar{s}),\label{eq:MDE}
\end{equation}
where $\widehat{W}$ is some symmetric and positive definite weighting matrix and $A$ is some compact parameter space.

Let $G_n(\alpha)$ be the objective function in \eqref{eq:MDE}. Suppose that $\widehat{W}\pto W$ as $n\to\infty$, where $W$ is also positive definite. Letting $\alpha_0\in A$ be the true Pareto exponent, we have
$$G_n(\alpha)\pto G(\alpha)\coloneqq (r(\alpha)-r(\alpha_0))^\top W(r(\alpha)-r(\alpha_0)).$$
Since $W$ is positive definite, we have $G(\alpha)\ge 0=G(\alpha_0)$, with equality if and only if $r(\alpha)=r(\alpha_0)$. The following proposition shows that the parameter $\alpha$ is point-identified by this condition.

\begin{prop}[Identification]\label{prop:identify}
$\alpha\neq \alpha_0$ implies $r(\alpha)\neq r(\alpha_0)$.
\end{prop}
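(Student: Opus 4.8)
The plan is to show that already the single component $r_1(\alpha)$ is a strictly monotone function of $\alpha$, so that equality of the full vectors $r(\alpha)=r(\alpha_0)$ forces $\alpha=\alpha_0$. First I would substitute the explicit formula \eqref{eq:mupq} into $r_k=\mu_k/\mu_K$ and note that the common factor $c/(1-\xi)$ cancels, leaving
\[
r_k(\alpha)=\frac{p_{k+1}^{\beta}-p_k^{\beta}}{p_{K+1}^{\beta}-p_K^{\beta}},\qquad \beta\coloneqq 1-\xi=1-1/\alpha.
\]
Since $\alpha\mapsto\beta$ is a strictly increasing bijection from $(1,\infty)$ onto $(0,1)$, it suffices to prove that $\beta\mapsto r_1(\beta)$ is injective on $(0,1)$.

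The key step is the representation $p_{k+1}^{\beta}-p_k^{\beta}=\int_{\log p_k}^{\log p_{k+1}}\beta e^{\beta t}\diff t$, which after cancelling the factor $\beta$ gives
\[
r_1(\beta)=\frac{\int_{a_1}^{b_1}e^{\beta t}\diff t}{\int_{a_K}^{b_K}e^{\beta t}\diff t},\qquad a_k=\log p_k,\ b_k=\log p_{k+1}.
\]
Differentiating $\log r_1$ in $\beta$ yields $\frac{\diff}{\diff\beta}\log r_1(\beta)=\bar t_1(\beta)-\bar t_K(\beta)$, where $\bar t_j(\beta)=\int_{a_j}^{b_j}t\, e^{\beta t}\diff t\big/\int_{a_j}^{b_j}e^{\beta t}\diff t$ is the mean of $t$ under the weight $e^{\beta t}$ restricted to the $j$-th interval.

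The decisive observation is that the numerator interval $[a_1,b_1]$ lies weakly to the left of the denominator interval $[a_K,b_K]$: because $p_2\le p_K$ we have $b_1=\log p_2\le\log p_K=a_K$. Since each interval has positive length ($p_k<p_{k+1}$), the two weighted means are strictly interior to their intervals, so $\bar t_1(\beta)<b_1\le a_K<\bar t_K(\beta)$, whence $\frac{\diff}{\diff\beta}\log r_1(\beta)<0$. Therefore $r_1$ is strictly decreasing, hence injective, and $\alpha\neq\alpha_0$ implies $r_1(\alpha)\neq r_1(\alpha_0)$, so $r(\alpha)\neq r(\alpha_0)$.

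The only nonroutine point is the strict monotonicity of $r_1$, and I expect the weighted-average representation above to be the cleanest route, since it reduces the sign of the derivative to the geometric fact that one percentile interval lies to the left of the other. An alternative would be to view $r_1$ as a ratio of increments of the convex map $p\mapsto p^{\beta}$ and argue via a monotone-likelihood-ratio property, but the integral representation avoids any case analysis in $\beta$ and makes the strict inequality transparent even in the boundary case $K=2$, where $b_1=a_K$ and the two intervals share an endpoint.
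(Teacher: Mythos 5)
Your proof is correct, and it shares the paper's overall skeleton---reduce identification to strict monotonicity of a single coordinate of $r$---but both the coordinate you pick and the monotonicity argument are genuinely different. The paper works with the \emph{last} coordinate $r_{K-1}(\alpha)=\frac{p_K^{1-\xi}-p_{K-1}^{1-\xi}}{p_{K+1}^{1-\xi}-p_K^{1-\xi}}$, normalizes by $p_K$ to rewrite it as $\frac{1-a^{1-\xi}}{b^{1-\xi}-1}$ with $a=p_{K-1}/p_K<1<b=p_{K+1}/p_K$, and invokes a separate lemma (proved via Cauchy's mean value theorem) that $x\mapsto\frac{a^x-1}{b^x-1}$ has a derivative of constant sign; that lemma only asserts monotonicity in some unspecified direction, determined by a small case analysis on $a\gtrless 1$, $b\gtrless 1$, $a\gtrless b$, which is all injectivity needs. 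You instead work with $r_1$, write each increment as $p_{k+1}^{\beta}-p_k^{\beta}=\int_{\log p_k}^{\log p_{k+1}}\beta e^{\beta t}\diff t$, and identify $\frac{\diff}{\diff\beta}\log r_1$ as a difference of exponentially weighted means of $t$ over two intervals of $\log p$; the sign then follows from the geometric fact that $[\log p_1,\log p_2]$ lies weakly to the left of $[\log p_K,\log p_{K+1}]$, with strictness because each mean is interior to its (nondegenerate) interval. Your route buys the actual direction of monotonicity with no case analysis and handles the shared-endpoint case $K=2$ transparently; the paper's lemma is marginally more general (any $a,b>0$ with $a,b\neq 1$, $a\neq b$, so it applies to the adjacent intervals $[p_{K-1},p_K]$ and $[p_K,p_{K+1}]$ after rescaling) and is stated as a reusable standalone fact. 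Both arguments are complete and correct.
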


Using standard arguments, consistency and asymptotic normality follows from the above identification result.

\begin{thm}[Consistency]\label{thm:consistent}
Let $A\subset (1,\infty)$ be compact, $\alpha_0\in A$, and suppose $\widehat{W}\pto W$ as $n\to\infty$, where $W$ is positive definite. Let $\widehat{\alpha}$ be the minimum distance estimator in \eqref{eq:MDE}. Then $\widehat{\alpha}\pto \alpha_0$.
\end{thm}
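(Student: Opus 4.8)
The plan is to recognize $\widehat{\alpha}$ as an extremum estimator and apply the standard consistency theorem for such estimators, whose hypotheses are: (i) the parameter space $A$ is compact; (ii) the limiting objective $G$ is continuous on $A$ and uniquely minimized at $\alpha_0$; and (iii) $G_n$ converges to $G$ uniformly on $A$ in probability. Condition (i) holds by assumption, so the work is to establish (ii) and (iii), after which the theorem delivers $\widehat{\alpha}\pto\alpha_0$ immediately.

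First I would verify condition (ii). The key preliminary is that $r(\alpha)$ is continuous on $A$. From \eqref{eq:mupq}, each $\mu_k=c(p_{k+1}^{1-\xi}-p_k^{1-\xi})/(1-\xi)$ with $\xi=1/\alpha\in(0,1)$ for $\alpha\in A\subset(1,\infty)$, so $\mu_k$ is a continuous and strictly positive function of $\alpha$; hence each ratio $r_k=\mu_k/\mu_K$ (in which the scale $c$ cancels) is continuous, and $r\colon A\to\R^{K-1}$ is continuous and, by compactness of $A$, bounded. Consequently $G(\alpha)=(r(\alpha)-r(\alpha_0))^\top W(r(\alpha)-r(\alpha_0))$ is continuous. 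Identification of $\alpha_0$ as the unique minimizer then follows exactly as in the paragraph preceding the theorem: positive definiteness of $W$ gives $G(\alpha)\ge 0=G(\alpha_0)$ with equality iff $r(\alpha)=r(\alpha_0)$, and Proposition \ref{prop:identify} rules out $r(\alpha)=r(\alpha_0)$ for $\alpha\neq\alpha_0$. This settles (ii).

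For (iii), I would exploit the structural feature that all of the randomness in $G_n$ enters through the $\alpha$-free quantities $\bar{s}$ and $\widehat{W}$, whereas the $\alpha$-dependence lives entirely in the deterministic continuous map $r$. Writing $G_n(\alpha)-G(\alpha)$ and adding and subtracting intermediate terms, I would bound $\sup_{\alpha\in A}\lvert G_n(\alpha)-G(\alpha)\rvert$ by a finite linear combination of $\lVert\widehat{W}-W\rVert$ and $\lVert\bar{s}-r(\alpha_0)\rVert$, with coefficients controlled by $\sup_{\alpha\in A}\lVert r(\alpha)\rVert$ (finite) together with $\lVert W\rVert$ and $\lVert\bar{s}\rVert$. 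Proposition \ref{prop:normshares} gives $\bar{s}\pto r(\alpha_0)$ (convergence in distribution to a constant implies convergence in probability), and $\widehat{W}\pto W$ by hypothesis, so each term vanishes in probability, and since the bound does not depend on $\alpha$ the convergence is uniform, establishing (iii).

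The main obstacle is bookkeeping rather than any deep difficulty: because $\alpha$ enters only through the continuous bounded $r(\alpha)$ and through no stochastic term, uniform convergence collapses to ordinary convergence of the $\alpha$-free random inputs, so the usual stochastic-equicontinuity machinery is unnecessary. The one point requiring genuine care is confirming that $\mu_K$ is bounded away from zero on $A$, so that the ratios $r_k$ remain well defined and continuous up to the endpoints of $A$; this holds because $\mu_K=\mu(p_K,p_{K+1})$ is continuous and strictly positive on the compact set $A\subset(1,\infty)$, since $p_{K+1}>p_K$ and $1-\xi>0$ there. With (i)--(iii) in hand, the cited consistency theorem yields $\widehat{\alpha}\pto\alpha_0$.
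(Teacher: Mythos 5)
Your proof is correct and follows essentially the same route as the paper: both verify the hypotheses of the standard extremum-estimator consistency theorem (\citealp[Theorem 2.1]{NeweyMcFadden1994}) via compactness of $A$, continuity of $r(\alpha)$, unique minimization of $G$ at $\alpha_0$ through Proposition \ref{prop:identify}, and uniform convergence of $G_n$ to $G$. The only difference is cosmetic: where the paper invokes a uniform law of large numbers for the uniform convergence, you derive it directly from $\bar{s}\pto r(\alpha_0)$ and $\widehat{W}\pto W$ together with boundedness of $r$ on the compact $A$, which is arguably the more apt argument here since all randomness in $G_n$ enters through $\alpha$-free quantities.
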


\begin{proof}
Clearly $G(\alpha)$ is continuous in $\alpha>1$. The statement follows from Proposition \ref{prop:identify}, the uniform law of large numbers, and \citet[Theorem 2.1]{NeweyMcFadden1994}.
\end{proof}

\begin{thm}[Asymptotic normality]\label{thm:asymptotic}
Let $r(\alpha), \Omega(\alpha)$ be defined as in Proposition \ref{prop:normshares}. Suppose that the assumptions of Theorem \ref{thm:consistent} hold and $\alpha_0$ is an interior point of $A$. Then
$$\sqrt{n}(\widehat{\alpha}-\alpha_0)\dto N(0,V)$$
as $n\to \infty$, where
$$V=(R^\top WR)^{-1}R^\top W\Omega WR(R^\top WR)^{-1}$$
for $\Omega=\Omega(\alpha_0)$ and $R=\nabla r(\alpha_0)$.
\end{thm}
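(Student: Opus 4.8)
The plan is to follow the standard first-order expansion argument for minimum distance estimators, using Proposition \ref{prop:normshares} as the sole source of the $\sqrt{n}$-asymptotics and Theorem \ref{thm:consistent} to localize around $\alpha_0$. Throughout, write $G_n(\alpha)=(r(\alpha)-\bar s)^\top\widehat W(r(\alpha)-\bar s)$ for the objective in \eqref{eq:MDE} and $R(\alpha)=\nabla r(\alpha)$ for the $(K-1)\times 1$ Jacobian of $r$.

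First I would exploit consistency to obtain a first-order condition. By Theorem \ref{thm:consistent}, $\widehat\alpha\pto\alpha_0$, and since $\alpha_0$ is interior to $A$, with probability approaching one $\widehat\alpha$ lies in the interior and hence satisfies $\nabla_\alpha G_n(\widehat\alpha)=0$. Using the symmetry of $\widehat W$, the gradient is $\nabla G_n(\alpha)=2R(\alpha)^\top\widehat W(r(\alpha)-\bar s)$, so the first-order condition reads $R(\widehat\alpha)^\top\widehat W(r(\widehat\alpha)-\bar s)=0$. Next I would linearize by applying the mean value theorem to each component of $r$, giving $r(\widehat\alpha)=r(\alpha_0)+R(\bar\alpha)(\widehat\alpha-\alpha_0)$, where $\bar\alpha$ collects the (componentwise) intermediate points, each lying between $\widehat\alpha$ and $\alpha_0$. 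Substituting into the first-order condition and rearranging yields
\begin{equation*}
\sqrt n(\widehat\alpha-\alpha_0)=\bigl[R(\widehat\alpha)^\top\widehat W R(\bar\alpha)\bigr]^{-1}R(\widehat\alpha)^\top\widehat W\cdot\sqrt n(\bar s-r(\alpha_0)).
\end{equation*}

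I would then pass to the limit via Slutsky's theorem. The function $r$ is continuously differentiable in $\alpha>1$ (immediate from the closed form $\mu_k(\alpha)$ in \eqref{eq:mupq} with $\xi=1/\alpha$, the scale $c$ canceling in each ratio $r_k=\mu_k/\mu_K$), so $R(\cdot)$ is continuous; combined with $\widehat\alpha\pto\alpha_0$, $\bar\alpha\pto\alpha_0$, and $\widehat W\pto W$, the leading factor converges in probability to $(R^\top WR)^{-1}R^\top W$ with $R=R(\alpha_0)$. Since Proposition \ref{prop:normshares} gives $\sqrt n(\bar s-r(\alpha_0))\dto N(0,\Omega)$ with $\Omega=\Omega(\alpha_0)$, Slutsky's theorem delivers $\sqrt n(\widehat\alpha-\alpha_0)\dto(R^\top WR)^{-1}R^\top W\cdot N(0,\Omega)$, whose variance is exactly $V=(R^\top WR)^{-1}R^\top W\Omega WR(R^\top WR)^{-1}$, as claimed.

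The main obstacle is the invertibility of $R^\top WR$. Because $\alpha$ is scalar and $W$ is positive definite, $R^\top WR$ is a positive scalar precisely when $R=\nabla r(\alpha_0)\neq 0$, so the crux is the local identification (full-rank) condition. Global identification (Proposition \ref{prop:identify}) does not by itself preclude a vanishing derivative, so I would verify $R\neq 0$ directly: differentiating the explicit ratios $r_k(\alpha)=\mu_k(\alpha)/\mu_K(\alpha)$ and checking that at least one component has nonzero derivative at $\alpha_0$, equivalently that $\alpha\mapsto r(\alpha)$ is locally an immersion. Once $R\neq 0$ is secured, the inverses above are well defined and the argument goes through.
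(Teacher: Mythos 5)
Your proof is correct and is essentially the paper's proof unpacked: the paper simply cites \citet[Theorem 3.2]{NeweyMcFadden1994} together with Theorem \ref{thm:consistent}, and the first-order condition, mean-value expansion, and Slutsky argument you write out are exactly the content of that cited theorem. The one step you flag but leave unfinished --- verifying the local rank condition $R=\nabla r(\alpha_0)\neq 0$, which the paper also leaves implicit in the citation --- closes immediately via Lemma \ref{lem:monotone}: its proof shows $f'(x)$ is nowhere zero (not merely of constant sign), so $r_{K-1}'(\alpha_0)\neq 0$ by the chain rule with $\xi=1/\alpha$, and hence $R^\top WR>0$.
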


\begin{proof}
Immediate from Theorem \ref{thm:consistent} and \citet[Theorem 3.2]{NeweyMcFadden1994}.
\end{proof}

By standard results in classical minimum distance estimation \citep{Chiang1956,Ferguson1958}, we achieve efficiency by choosing the weighting matrix such that $\widehat{W}\pto W=\Omega^{-1}$. Therefore the most natural estimator is the following continuously updated minimum distance estimator (CUMDE).

\begin{cor}[Efficient CMD]\label{cor:CUMDE}
Let everything be as in Theorem \ref{thm:asymptotic} and define the continuously updated minimum distance estimator (CUMDE) by
\begin{equation}
\widehat{\alpha}=\argmin_{\alpha\in A}(r(\alpha)-\bar{s})^\top\Omega(\alpha)^{-1}(r(\alpha)-\bar{s}),\label{eq:CUMDE}
\end{equation}
where $\Omega(\alpha)$ is given as in Proposition \ref{prop:normshares}. Then
\begin{equation}
\sqrt{n}(\widehat{\alpha}-\alpha_0)\dto N(0,(R^\top \Omega^{-1}R)^{-1}),\label{eq:CUMDE_lim}
\end{equation}
where $\Omega=\Omega(\alpha_0)$ and $R=\nabla r(\alpha_0)$. The estimator $\widehat{\alpha}_{\mathrm{CUMDE}}$ has the minimum asymptotic variance among all CMD estimators.
\end{cor}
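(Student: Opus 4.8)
The plan is to view the CUMDE as a classical minimum distance estimator whose weighting matrix $\Omega(\alpha)^{-1}$ happens to depend on the optimization variable, and to argue that this dependence does not affect the first-order asymptotics, so that Theorem \ref{thm:asymptotic} applies with the limiting weight $W=\Omega(\alpha_0)^{-1}=\Omega^{-1}$. I would proceed in three steps---consistency, limiting distribution, and the efficiency ranking. Consistency follows exactly as in Theorem \ref{thm:consistent}: writing $G_n(\alpha)=(r(\alpha)-\bar s)^\top\Omega(\alpha)^{-1}(r(\alpha)-\bar s)$, the uniform law of large numbers and $\bar s\pto r(\alpha_0)$ (Proposition \ref{prop:normshares}) give $G_n(\alpha)\pto(r(\alpha)-r(\alpha_0))^\top\Omega(\alpha)^{-1}(r(\alpha)-r(\alpha_0))$ uniformly on the compact set $A$, using that $\alpha\mapsto\Omega(\alpha)^{-1}$ is continuous with positive definite values. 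The limit is minimized uniquely at $\alpha_0$ by Proposition \ref{prop:identify}, so $\widehat\alpha\pto\alpha_0$.

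For the limiting distribution I would expand the first-order condition of \eqref{eq:CUMDE}. Differentiating the objective in $\alpha$ yields a term $2R(\alpha)^\top\Omega(\alpha)^{-1}(r(\alpha)-\bar s)$ from $\nabla r$ plus a term $(r(\alpha)-\bar s)^\top(\partial_\alpha\Omega(\alpha)^{-1})(r(\alpha)-\bar s)$ from differentiating the weight, where $R(\alpha)=\nabla r(\alpha)$. The key observation is that at $\widehat\alpha$ the residual $r(\widehat\alpha)-\bar s$ is $O_p(n^{-1/2})$: indeed $\bar s-r(\alpha_0)=O_p(n^{-1/2})$ by Proposition \ref{prop:normshares}, and $r(\widehat\alpha)-r(\alpha_0)=O_p(n^{-1/2})$ by consistency and the delta method. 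Hence the weight-derivative term is $O_p(n^{-1})$ and is negligible relative to the $O_p(n^{-1/2})$ score term. Consequently the CUMDE satisfies, to leading order, the same first-order condition as the two-step CMD estimator with the fixed weight $W=\Omega^{-1}$, and its limiting law is that of Theorem \ref{thm:asymptotic} with $W=\Omega^{-1}$. Substituting $W=\Omega^{-1}$ collapses the sandwich variance,
\[
V=(R^\top\Omega^{-1}R)^{-1}R^\top\Omega^{-1}\,\Omega\,\Omega^{-1}R(R^\top\Omega^{-1}R)^{-1}=(R^\top\Omega^{-1}R)^{-1},
\]
which is \eqref{eq:CUMDE_lim}.

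For the efficiency ranking I would show that the variance $V(W)=(R^\top WR)^{-1}R^\top W\Omega WR(R^\top WR)^{-1}$ of an arbitrary CMD estimator satisfies $V(W)\ge(R^\top\Omega^{-1}R)^{-1}$. Setting $A=(R^\top WR)^{-1}R^\top W$ and $B=(R^\top\Omega^{-1}R)^{-1}R^\top\Omega^{-1}$, one has $AR=BR=1$, $V(W)=A\Omega A^\top$, and $(R^\top\Omega^{-1}R)^{-1}=B\Omega B^\top$. Using $B^\top=\Omega^{-1}R(R^\top\Omega^{-1}R)^{-1}$ and $AR=1$, the cross term vanishes, $(A-B)\Omega B^\top=A\Omega B^\top-B\Omega B^\top=(R^\top\Omega^{-1}R)^{-1}-(R^\top\Omega^{-1}R)^{-1}=0$, so that $V(W)-(R^\top\Omega^{-1}R)^{-1}=(A-B)\Omega(A-B)^\top\ge0$ since $\Omega$ is positive definite. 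This is the Aitken/Gauss--Markov inequality, and it shows the CUMDE attains the minimum variance among CMD estimators.

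The step I expect to be the main obstacle is the first-order reduction in the second paragraph: rigorously establishing that the extra score term from $\partial_\alpha\Omega(\alpha)^{-1}$ is asymptotically negligible, i.e.\ that the continuously updated estimator is first-order equivalent to the efficiently weighted two-step estimator. This needs the uniform $O_p(n^{-1/2})$ bound on the residual together with continuity of $\alpha\mapsto R(\alpha)$ and $\alpha\mapsto\Omega(\alpha)^{-1}$ on $A$, both of which are guaranteed by the closed-form expressions in Lemma \ref{lem:Sigma} and Proposition \ref{prop:normshares}. The remaining algebra---collapsing the sandwich and verifying the efficiency inequality---is routine.
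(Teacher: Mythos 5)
Your proposal is correct and follows essentially the same route the paper implicitly takes: the paper offers no written proof of this corollary, simply invoking Theorem \ref{thm:asymptotic} with $W=\Omega^{-1}$ together with the standard efficiency result for classical minimum distance estimators, and your Aitken-type computation $(A-B)\Omega(A-B)^\top\ge 0$ is exactly that standard argument. Your added value is making explicit the step the paper glosses over, namely that \eqref{eq:CUMDE} is not literally an instance of \eqref{eq:MDE} because the weight $\Omega(\alpha)^{-1}$ depends on the optimization variable, and that the extra term in the first-order condition coming from $\partial_\alpha \Omega(\alpha)^{-1}$ is $O_p(n^{-1})$ and hence first-order negligible. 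One step is circular as written: you justify $r(\widehat{\alpha})-r(\alpha_0)=O_p(n^{-1/2})$ ``by consistency and the delta method,'' but consistency alone gives only $o_p(1)$, and the delta method presupposes the $\sqrt{n}$-rate you are in the process of establishing. The repair is standard and does not use the rate of $\widehat{\alpha}$ at all: since $\widehat{\alpha}$ minimizes the objective, $G_n(\widehat{\alpha})\le G_n(\alpha_0)=(r(\alpha_0)-\bar{s})^\top\Omega(\alpha_0)^{-1}(r(\alpha_0)-\bar{s})=O_p(n^{-1})$ by Proposition \ref{prop:normshares}, and because the smallest eigenvalue of $\Omega(\alpha)^{-1}$ is bounded away from zero uniformly over the compact set $A$ (continuity plus positive definiteness), this gives $\|r(\widehat{\alpha})-\bar{s}\|^2=O_p(n^{-1})$ directly, which is all your negligibility argument needs. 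With that substitution the proof is complete.
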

We can use \eqref{eq:CUMDE_lim} to construct confidence intervals of $\alpha$.

We now consider testing the null hypothesis $H_0$: $\alpha=\alpha_0$ against the alternative $H_1$: $\alpha\neq \alpha_0$. The following propositions show that we can implement likelihood ratio and specification tests, which avoid computing the derivative of $r(\alpha)$. We omit the proofs since they are analogous to standard GMM results \citep[Section 9]{NeweyMcFadden1994}. The likelihood ratio test can also be inverted to construct the confidence interval. 

\begin{prop}[Likelihood ratio test]\label{prop:LR}
Under the null $H_0$: $\alpha=\alpha_0$, we have
$$n(G_n(\alpha)-G_n(\widehat{\alpha}))\dto \chi^2(1).$$
Under the alternative $H_1$: $\alpha\neq \alpha_0$, we have
$$n(G_n(\alpha)-G_n(\widehat{\alpha}))\pto \infty.$$
\end{prop}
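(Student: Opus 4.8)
I would recognize this as the standard distance-metric (LR-type) test for minimum distance estimation, and accordingly I would prove it under efficient weighting, i.e.\ assuming $\widehat W\pto W=\Omega^{-1}$ with $\Omega=\Omega(\alpha_0)$ (as in Corollary~\ref{cor:CUMDE}), since the $\chi^2(1)$ calibration relies on this choice. Write $g_n(\alpha)\coloneqq r(\alpha)-\bar{s}$, and under the null let $\alpha_0$ be both the hypothesized and true value. By Proposition~\ref{prop:normshares}, $Z_n\coloneqq\sqrt{n}\,g_n(\alpha_0)\dto Z\sim N(0,\Omega)$. The plan is to linearize both $nG_n(\alpha_0)$ and $nG_n(\widehat{\alpha})$ around $\alpha_0$, take the difference, and identify the resulting quadratic form in $Z$ as a chi-square.

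First I would expand the minimizer. The first-order condition $R(\widehat{\alpha})^\top\widehat W\,g_n(\widehat{\alpha})=0$ together with $\widehat{\alpha}\pto\alpha_0$, $R(\widehat{\alpha})\pto R$, and $\sqrt{n}\,g_n(\widehat{\alpha})=Z_n+R\sqrt{n}(\widehat{\alpha}-\alpha_0)+o_p(1)$ yields
\[
\sqrt{n}(\widehat{\alpha}-\alpha_0)=-(R^\top WR)^{-1}R^\top W Z_n+o_p(1),
\qquad
\sqrt{n}\,g_n(\widehat{\alpha})=M Z_n+o_p(1),
\]
where $M\coloneqq I-R(R^\top WR)^{-1}R^\top W$; here $R=\nabla r(\alpha_0)\neq 0$ is guaranteed by the local identification underlying Theorem~\ref{thm:asymptotic}. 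Since $nG_n(\alpha_0)=Z_n^\top\widehat W Z_n$ and $nG_n(\widehat{\alpha})=Z_n^\top M^\top\widehat W M Z_n+o_p(1)$, a direct computation using $W^\top=W$ gives the algebraic identity $W-M^\top W M=WR(R^\top WR)^{-1}R^\top W$, so that
\[
n\bigl(G_n(\alpha_0)-G_n(\widehat{\alpha})\bigr)
=Z_n^\top WR(R^\top WR)^{-1}R^\top W Z_n+o_p(1).
\]
Setting $W=\Omega^{-1}$ and passing to the limit, this converges to $Z^\top\Omega^{-1}R(R^\top\Omega^{-1}R)^{-1}R^\top\Omega^{-1}Z$. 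Substituting $Z=\Omega^{1/2}\zeta$ with $\zeta\sim N(0,I)$ and $\widetilde R\coloneqq\Omega^{-1/2}R$ reduces this to $\zeta^\top\widetilde R(\widetilde R^\top\widetilde R)^{-1}\widetilde R^\top\zeta$, the standard-normal quadratic form associated with the orthogonal projector onto the one-dimensional column space of $\widetilde R$, hence exactly $\chi^2(1)$. (With continuously updated weighting as in \eqref{eq:CUMDE}, the extra terms from differentiating $\alpha\mapsto\Omega(\alpha)^{-1}$ multiply $g_n(\widehat{\alpha})=O_p(n^{-1/2})$ and are therefore asymptotically negligible, so the same linearization applies.)

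For the alternative, I would argue by divergence rather than linearization. If the hypothesized value $\alpha$ differs from the true $\alpha_0$, Proposition~\ref{prop:identify} gives $r(\alpha)\neq r(\alpha_0)$, whence $G_n(\alpha)\pto G(\alpha)=(r(\alpha)-r(\alpha_0))^\top W(r(\alpha)-r(\alpha_0))>0$ by positive definiteness of $W$; thus $nG_n(\alpha)\pto\infty$. Meanwhile $nG_n(\widehat{\alpha})$ is the overidentification statistic, which is $O_p(1)$ by Theorem~\ref{thm:consistent} and the efficient-weighting limit theory. The difference therefore diverges to $+\infty$ in probability. I expect the one genuinely delicate point to be the algebraic cancellation $W-M^\top W M=WR(R^\top WR)^{-1}R^\top W$ and the subsequent recognition of the limiting matrix as a rank-one orthogonal projector after the $\Omega^{-1/2}$ rotation; everything else is routine once the linearization from the proof of Theorem~\ref{thm:asymptotic} is in hand.
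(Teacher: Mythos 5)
Your proof is correct and is precisely the standard distance-metric argument that the paper invokes by citing \citet[Section 9]{NeweyMcFadden1994} instead of writing out: linearize $g_n$ at the minimizer, use the identity $W-M^\top WM=WR(R^\top WR)^{-1}R^\top W$, and recognize the limiting quadratic form as a rank-one orthogonal projector after the $\Omega^{-1/2}$ rotation, with divergence under the alternative following from Proposition \ref{prop:identify}. Your explicit observation that the $\chi^2(1)$ calibration requires the efficient weighting $W=\Omega^{-1}$ (as in Corollary \ref{cor:CUMDE}), together with the remark that the continuous-updating correction to the first-order condition is of lower order, supplies a caveat the paper's statement leaves implicit.
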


\begin{prop}[Specification test]\label{prop:spec}
Suppose that $K\ge 3$. If $F$ is the Pareto CDF with some exponent $\alpha\in A$, then
$$nG_n(\widehat{\alpha})\dto \chi^2(K-2).$$
\end{prop}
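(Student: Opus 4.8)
The plan is to mimic the standard overidentification (J-test) argument for efficient minimum distance estimation. Write $g_n=\bar s-r(\alpha_0)$, so that Proposition~\ref{prop:normshares} gives $\sqrt n\,g_n\dto Z\sim N(0,\Omega)$ with $\Omega=\Omega(\alpha_0)$ positive definite. Set $W=\Omega^{-1}$ and $R=\nabla r(\alpha_0)$, a nonzero $(K-1)$-vector. First I would establish the first-order representation of the fitted residual. Reusing the expansion behind Theorem~\ref{thm:asymptotic}, the CUMDE satisfies $\sqrt n(\widehat\alpha-\alpha_0)=(R^\top WR)^{-1}R^\top W\sqrt n\,g_n+o_p(1)$; substituting the Taylor expansion $r(\widehat\alpha)=r(\alpha_0)+R(\widehat\alpha-\alpha_0)+o_p(n^{-1/2})$ yields
\[
\sqrt n\,(r(\widehat\alpha)-\bar s)=-M\sqrt n\,g_n+o_p(1),\qquad M\coloneqq I_{K-1}-R(R^\top WR)^{-1}R^\top W.
\]
Plugging this into the objective and using $\Omega(\widehat\alpha)^{-1}\pto W$ gives
\[
nG_n(\widehat\alpha)=(\sqrt n\,g_n)^\top M^\top WM(\sqrt n\,g_n)+o_p(1).
\]

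Second, I would carry out the algebraic simplification that makes the degrees of freedom transparent. A direct computation, using $BR=WR$ for $B\coloneqq WR(R^\top WR)^{-1}R^\top W$, collapses the sandwich to the residual-projection form
\[
M^\top WM=W-WR(R^\top WR)^{-1}R^\top W=\Omega^{-1}-\Omega^{-1}R(R^\top\Omega^{-1}R)^{-1}R^\top\Omega^{-1}.
\]
Whitening via $U\coloneqq\Omega^{-1/2}Z\sim N(0,I_{K-1})$ and $\tilde R\coloneqq\Omega^{-1/2}R$, the continuous mapping theorem then gives
\[
nG_n(\widehat\alpha)\dto U^\top\bigl(I_{K-1}-\tilde R(\tilde R^\top\tilde R)^{-1}\tilde R^\top\bigr)U=U^\top(I_{K-1}-P)U,
\]
where $P$ is the orthogonal projection onto the one-dimensional span of $\tilde R$. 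Since $I_{K-1}-P$ is an orthogonal projection of rank $(K-1)-1=K-2$ and $U$ is standard normal, the limit is $\chi^2(K-2)$; the hypothesis $K\ge 3$ is exactly what makes this distribution nondegenerate. Positive definiteness of $\Omega$ (Proposition~\ref{prop:normshares}) guarantees that $\Omega^{-1/2}$ exists, and $R\neq 0$ (which follows from the identification argument of Proposition~\ref{prop:identify}) guarantees that $P$ has rank exactly one.

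The main obstacle is the rigorous justification of the linearization step, not the subsequent linear algebra. Two points need care. First, because the CUMDE uses the $\alpha$-dependent weight $\Omega(\alpha)^{-1}$, its first-order condition contains an extra term involving $\partial_\alpha\Omega(\alpha)^{-1}$ evaluated near $\widehat\alpha$; this term is multiplied by the fitted residual, which is $O_p(n^{-1/2})$, so it is asymptotically negligible and the CUMDE is first-order equivalent to the two-step efficient estimator with fixed weight $W=\Omega(\alpha_0)^{-1}$. Making this precise requires continuity of $\alpha\mapsto\Omega(\alpha)^{-1}$ and its derivative on $A$, which is clear from the closed-form expressions in Lemmas~\ref{lem:StiglerPareto} and~\ref{lem:Sigma}. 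Second, controlling the Taylor remainder uniformly requires stochastic equicontinuity of the (smooth, finite-dimensional) objective, which follows from consistency (Theorem~\ref{thm:consistent}) together with twice-continuous differentiability of $r$ and $\Omega$ in $\alpha$. Once these standard regularity conditions are in place, the argument is identical to \citet[Section~9]{NeweyMcFadden1994}.
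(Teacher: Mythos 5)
Your proof is correct and is precisely the standard overidentification (J-test) argument from \citet[Section 9]{NeweyMcFadden1994} that the paper itself defers to without writing out: the paper omits the proof of Proposition \ref{prop:spec}, stating only that it is analogous to standard GMM results. Your linearization, the collapse of $M^\top WM$ to $\Omega^{-1}-\Omega^{-1}R(R^\top\Omega^{-1}R)^{-1}R^\top\Omega^{-1}$, the whitening to a rank-$(K-2)$ projection, and the remark that the continuously updated weight contributes only an asymptotically negligible (in fact $O_p(n^{-1})$, being quadratic in the residual) term to the first-order condition are all exactly the intended route.
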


\subsection{Discussion and implementation}\label{subsec:implement}

In this section we discuss the choice of the top income shares and implementation of our estimation method. As in Section \ref{sec:orderstat}, let $Y$ be a positive random variable with cumulative distribution function $F$. Note that our Pareto assumption serves as a tail approximation of the underlying distribution $F$, which can actually be substantially different from the exact Pareto distribution. Such approximation has been formally justified in the statistic literature under very mild primitive assumptions. To be specific, consider some tail cutoff $u$ and define 
$$F_{u}(y) =\frac{F(u+y)-F(u)}{1-F(u)}$$
as the conditional CDF given $Y\ge u$. Also define the generalized Pareto distribution (GPD, \citealp[Chapter 3]{deHaanFerreira2006}), which is given by
\begin{equation}
G(y;\xi,\sigma)=\begin{cases}
1-\left(1+\xi\frac{y}{\sigma}\right)^{-1/\xi}, & (\xi\neq 0)\\
1-\exp(-y/\sigma), & (\xi=0)
\end{cases}\label{GPD}
\end{equation}
with $y\ge 0$ if $\xi\ge 0$ and $y\in (0,-\sigma/\xi)$ otherwise. Let $y_0$ be the right end-point of the support of $Y$, which is $\infty$ if $\xi\ge 0$. Then the Pickands-Balkema-de Haan Theorem \citep{BalkemaDeHaan1974,Pickands1975} states that the GPD is a good approximation of $F_{u}$ in the sense that
\begin{equation}
\lim_{u\to y_0}\sup_{y\in (0,y_0-u)}\abs{F_{u}(y) -G(y;\xi,\sigma)}=0, \label{GPDapprox}
\end{equation}
where the scale parameter $\sigma>0$ implicitly depends on $u$. The parameter $\alpha=1/\xi$ is our object of interest. It is uniquely determined by the underlying distribution and characterizes its tail heaviness.

The necessary and sufficient condition for the approximation \eqref{GPDapprox} to hold is that $F$ lies in the domain of attraction of one of the three limit laws, which is a mild condition and holds for almost all commonly used distributions. The positive $\alpha$ case covers distributions with a Pareto-type tail such as Pareto, Student-$t$, and $F$ distributions. In particular, if $F$ is the standard Pareto distribution such that $1-F(y)=y^{-\alpha }$, then $F_u(y)=G(y;\xi,\sigma)$ holds with $\xi=1/\alpha$ and $\sigma=u/\alpha$. If $F$ is Student-$t$ distribution with $\nu$ degrees of freedom, then \eqref{GPDapprox} holds asymptotically as $u$ diverges with $\xi$ being equal to $1/\nu$. See \citet[Chapter 1]{deHaanFerreira2006} for an overview.

For the estimation of $\alpha$, the practical determination of $u$ (and our top income percentiles $p_1,\dots,p_{K+1}$) is widely accepted as a difficult question even when $\set{Y_i}_{i=1}^n$ is observed. It becomes more challenging (if possible at all) in our setting with tabulations. To see this, consider the example that $F$ is a mixture of some Pareto distribution with probability 0.1 and the standard normal distribution with probability 0.9. This structure implies that only the very few top shares are informative about the true tail. In this case, choosing too many top shares, say up to 10\%, would implicitly include too many observations from the mid-sample, which incurs a large bias. However, choosing fewer top shares leads to fewer observations and hence compromises the asymptotic Gaussianity of the central limit theorem. In principle, there cannot exist a procedure that consistently determines the optimal choice of $u$ since $F$ is unknown. This is close in spirit to the bias-variance trade-off in the choice of bandwidth in standard kernel regressions. \citet[Theorem 5.1]{MullerWang2017} formalize this result in the case with full observations. Given this difficulty, we resort to simulation studies in 
Appendix \ref{sec:sim} for the selection of top shares in the application in Section \ref{sec:appl}.

By Corollary \ref{cor:CUMDE}, we can compute $\widehat{\alpha}$ by numerically solving the minimization problem \eqref{eq:CUMDE}. However, it is clear from Lemmas \ref{lem:StiglerPareto} and \ref{lem:Sigma} that $\xi=1/\alpha$ shows up everywhere in $r(\alpha)$ and $\Omega(\alpha)$, and hence it is more convenient to optimize over $\xi=1/\alpha\in (0,1)$ instead of $\alpha>1$. Therefore let $\tilde{r}(\xi)=r(1/\xi)$ and $\tilde{\Omega}(\xi)=\Omega(1/\xi)$ . We can thus estimate $\xi$ (and $\alpha$) using the following algorithm.
\begin{enumerate}
\item Given the top income share data $S_1,\dots,S_{K+1}$ for the top $p_1,\dots p_{K+1}$ percentiles, define the normalized shares $\bar{s}$ by \eqref{eq:normshares}.
\item For $\xi\in (0,1)$, define $\tilde{r}_k(\xi)=\frac{p_{k+1}^{1-\xi}-p_k^{1-\xi}}{p_{K+1}^{1-\xi}-p_K^{1-\xi}}$ and $\tilde{r}(\xi)=(\tilde{r}_1(\xi),\dots,\tilde{r}_{K-1}(\xi))^\top$.
\item Define $\tilde{\Omega}(\xi)=\Omega(1/\xi)$ using \eqref{eq:mupq}, \eqref{eq:sigpq}, \eqref{eq:Sigjk}, and Proposition \ref{prop:normshares}, where we can set $c=1$ without loss of generality.
\item Define the objective function
$$\tilde{G}(\xi)=(\tilde{r}(\xi)-\bar{s})^\top \tilde{\Omega}(\xi)^{-1}(\tilde{r}(\xi)-\bar{s})$$
and compute the minimizer $\widehat{\xi}$ of $\tilde{G}$ over $\xi\in (0,1)$. The point estimate of the Pareto exponent $\alpha$ is $\widehat{\alpha}=1/\widehat{\xi}$.
\item If the sample size $n$ is known, use Corollary \ref{cor:CUMDE} or Proposition \ref{prop:LR} to construct the confidence interval. For this purpose, one can use
\begin{align*}
\frac{\tilde{r}_k'(\xi)}{\tilde{r}_k(\xi)}&=\frac{p_{K+1}^{1-\xi}\log p_{K+1}-p_K^{1-\xi}\log p_K}{p_{K+1}^{1-\xi}-p_K^{1-\xi}}-\frac{p_{k+1}^{1-\xi}\log p_{k+1}-p_k^{1-\xi}\log p_k}{p_{k+1}^{1-\xi}-p_k^{1-\xi}},\\
r_k'(\alpha)&=\frac{\diff}{\diff \alpha}\tilde{r}_k(1/\alpha)=-\tilde{r}_k'(\xi)\xi^2.
\end{align*}
\end{enumerate}

In 
Appendix \ref{sec:sim}, we conduct simulation studies and find that our estimator has excellent finite sample properties.

\section{Pareto exponents in U.S.\ and France}\label{sec:appl}

As an empirical application, we estimate the Pareto exponent of the income distribution in U.S.\ for the period 1917--2017 and France for 1900--2014. For U.S., we use the updated top income share data (including capital gains) from \cite{piketty-saez2003} (see Footnote \ref{fn:PS} for details). For France, we obtain the top income shares from the World Inequality Database (Footnote \ref{fn:WID}). These datasets are based on tax returns (administrative data) and underreporting may not be as big an issue as in survey data. We focus on these two countries because long time series of detailed top income shares (top 0.01\%--10\%) are available.

Figure \ref{fig:TopShares} plots the top 1\% and 10\% income shares (including capital gains) for U.S. As is well-known, the series are roughly parallel and exhibit a U-shaped pattern over the century. Figure \ref{fig:Pareto} plots the Pareto exponent estimated as in Section \ref{subsec:implement}. ``Top 1\%'' uses the top 0.01\%, 0.1\%, 0.5\%, and 1\% groups ($K=3$), whereas ``Top 10\%'' also includes the top 5\% and 10\% groups ($K=5$).  For comparison, we also calculate the ``simple'' estimator in \eqref{eq:alphaSpq} using the top 0.1\% and 1\% shares as is common in the literature.

We can make a few observations from Figure \ref{fig:Pareto}. First, the Pareto exponent estimates are significantly different when using the top 1\% and 10\% groups. Based on the discussion in Section \ref{subsec:implement} and the simulation results in 
Appendix \ref{sec:sim}, this suggests that the income distribution is not exactly Pareto and that the 10\% result is biased. Therefore we should focus on the top 1\% result. The Pareto exponent ranges from 1.34 to 2.29. Second, our minimum distance estimator using the top 1\% and the ``simple'' estimator in \eqref{eq:alphaSpq} based on the top 0.1\% and 1\% shares behave similarly. However, according to the simulation results, the minimum distance estimator has better finite sample properties. Finally, Figures \ref{fig:TopShares} and \ref{fig:Pareto} tell different stories about income inequality. While the top 1\% income share in Figure \ref{fig:TopShares} has been rising roughly linearly since about 1975, the Pareto exponent in Figure \ref{fig:Pareto} sharply declines (implying increased inequality) between about 1975 and 1985 but remains flat since then. This observation suggests that the rise in inequality since 1985 as seen in Figure \ref{fig:TopShares} is mainly driven by the redistribution between the rich (top 1\%) and the poor (bottom 99\%), and there is no evidence of increased inequality among the rich.

\begin{figure}[!htb]
\centering
\begin{subfigure}{0.48\linewidth}
\includegraphics[width=\linewidth]{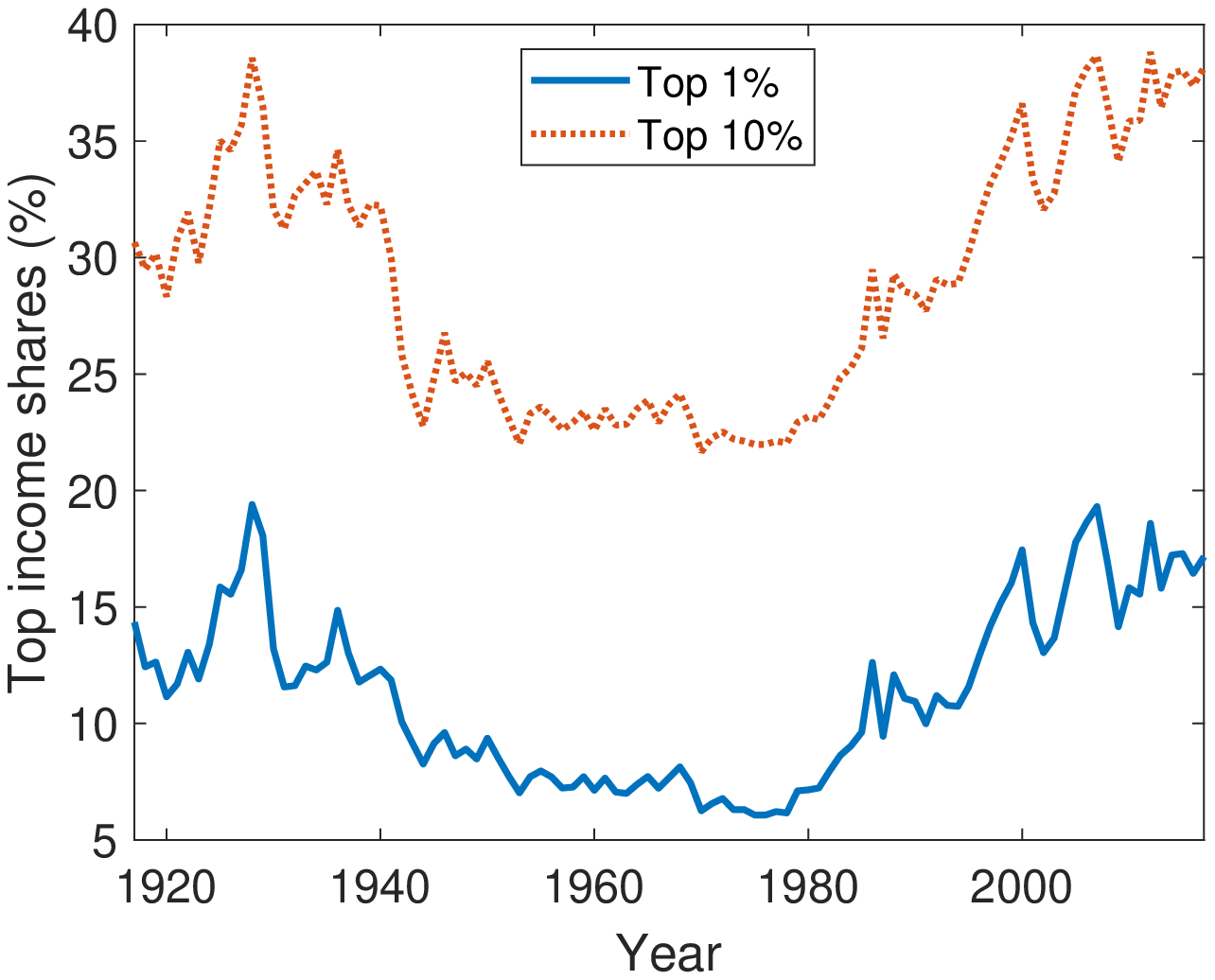}
\caption{Top income shares.}\label{fig:TopShares}
\end{subfigure}
\begin{subfigure}{0.48\linewidth}
\includegraphics[width=\linewidth]{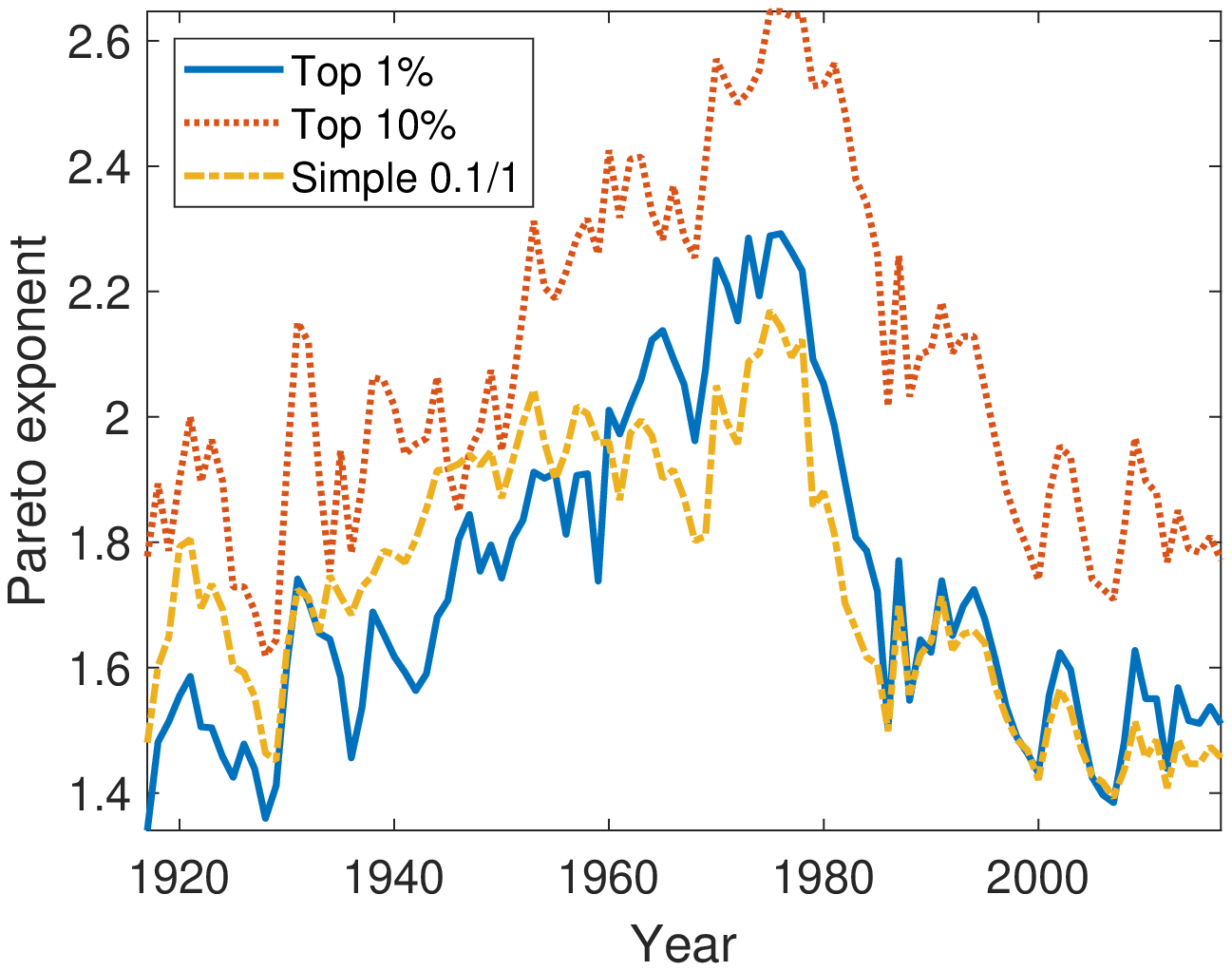}
\caption{Pareto exponent.}\label{fig:Pareto}
\end{subfigure}
\caption{Income distribution in U.S.}\label{fig:incomeUS}
\end{figure}

Figure \ref{fig:incomeFr} repeats the analysis for France. Again, the point estimates of the Pareto exponent when using the top 1\% and 10\% groups differ significantly, and therefore we should focus on the 1\% result. Unlike in U.S., where 1960--1980 appears to be an unusual period of low inequality (high Pareto exponent), in France the Pareto exponent is relatively stable at around 1.5 prewar and 2 postwar. Therefore there seems to be a regime change at around World War II, corroborating to \cite{Piketty2003}'s analysis.

\begin{figure}[!htb]
\centering
\begin{subfigure}{0.48\linewidth}
\includegraphics[width=\linewidth]{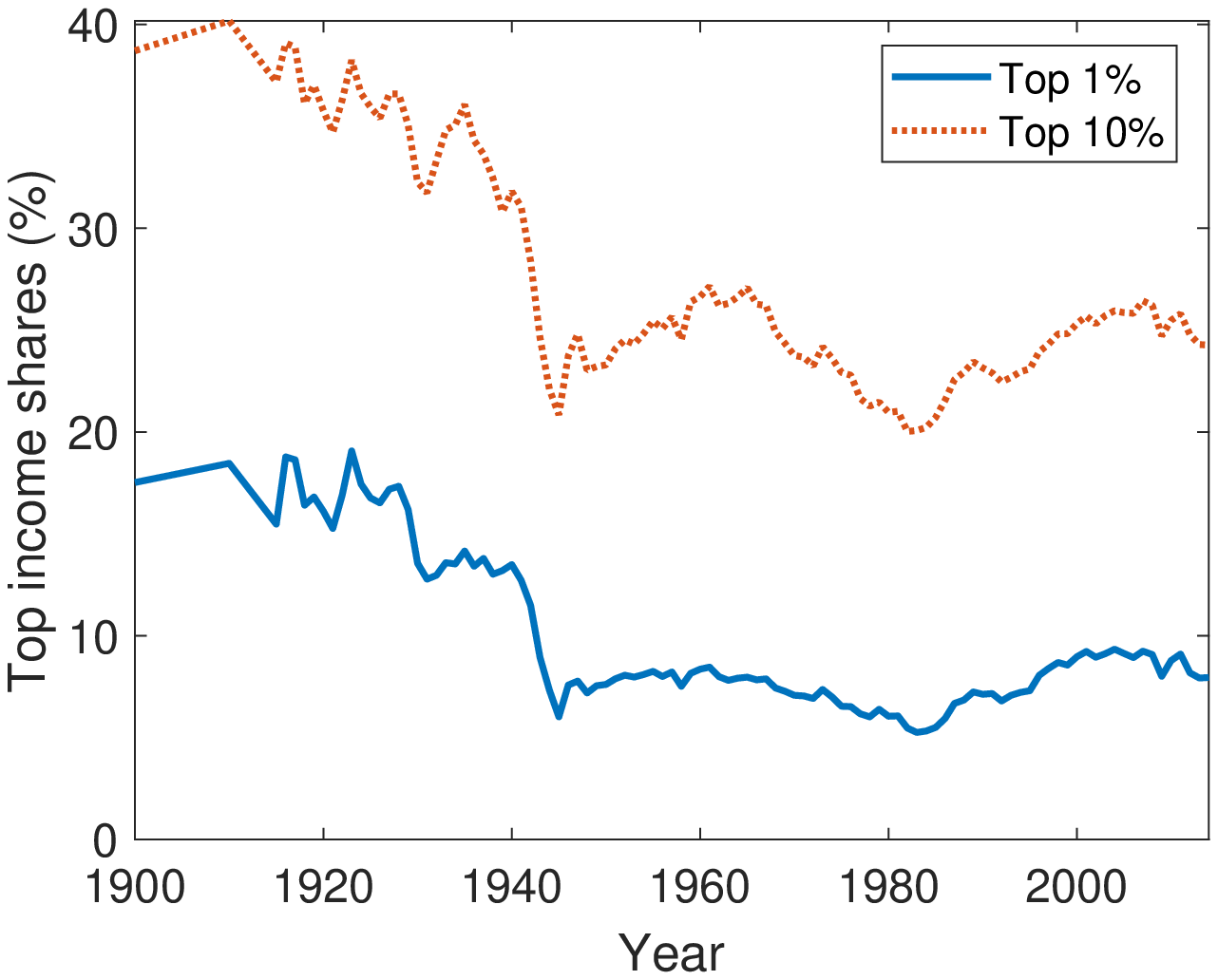}
\caption{Top income shares.}\label{fig:TopSharesFr}
\end{subfigure}
\begin{subfigure}{0.48\linewidth}
\includegraphics[width=\linewidth]{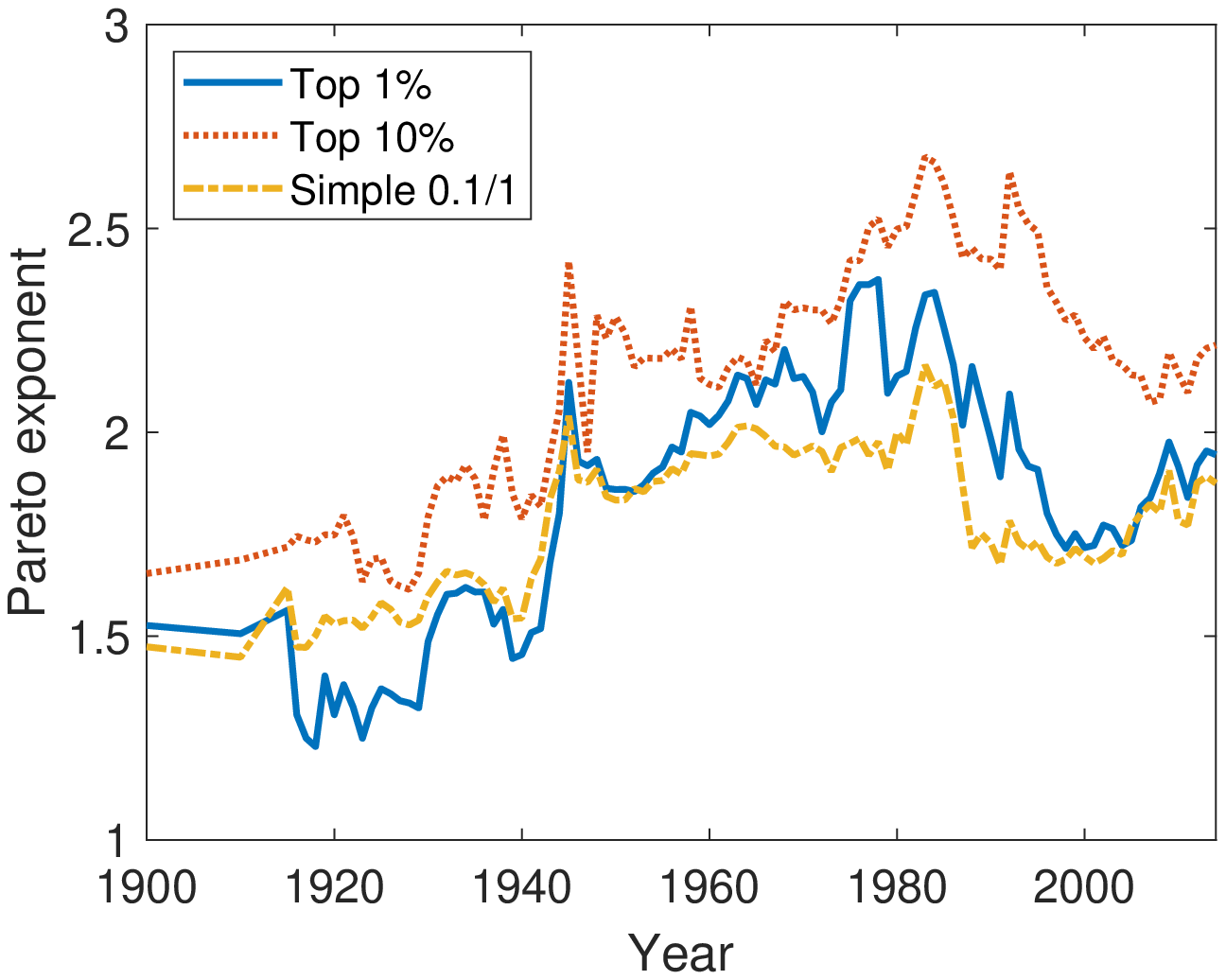}
\caption{Pareto exponent.}\label{fig:ParetoFr}
\end{subfigure}
\caption{Income distribution in France.}\label{fig:incomeFr}
\end{figure}

Conducting statistical inference on $\alpha$ typically requires the sample size $n$ if the dataset is cross-sectional. In our dataset, which consists only of the top income shares, the sample size is unknown. However, there are potentially two approaches to conduct statistical inference. One is to assume a conservative number for the sample size, and the other is to exploit the panel data structure to construct the confidence interval without the knowledge of $n$ by using the method proposed by \cite{IbragimovMueller2010,IbragimovMueller2016} and \citet[Section 3.3]{IbragimovIbragimovWalden2015}.\footnote{We thank an anonymous referee for this suggestion.}

With the first approach, it is reasonable to assume that the number of households is at least one million ($n=10^6$) for both U.S.\ and France. We can use this number to construct conservative confidence intervals as discussed in Section \ref{subsec:implement}. Figure \ref{fig:CI95} shows the length of these conservative 95\% confidence intervals, which are constructed using the asymptotic normality of $\widehat{\alpha}$ as in Corollary \ref{cor:CUMDE}. For both U.S.\ and France, the case when using only the income shares within the top 1\% (which is more relevant due to possible model misspecification), the length is at most around 0.1, which is similar to the number in Table \ref{t:sim} with sample size $n=10^6$. Therefore the confidence intervals are within $\pm 0.05$ of the estimated Pareto exponents. For example, from Figures \ref{fig:Pareto}, \ref{fig:ParetoFr}, and \ref{fig:CI95}, we can conclude that the income Pareto exponent has significantly declined from late 1970s to early 2000s both in U.S.\ and France, although in U.S.\ the Pareto exponent has been stable at around 1.5 since 1985. Returning to the optimal taxation problem discussed in the introduction, this estimate together with income elasticity 0.3 suggests that the revenue-maximizing income tax rate is $1/(1+1.5\times 0.3)=69\%$.

\begin{figure}[!htb]
\centering
\begin{subfigure}{0.48\linewidth}
\includegraphics[width=\linewidth]{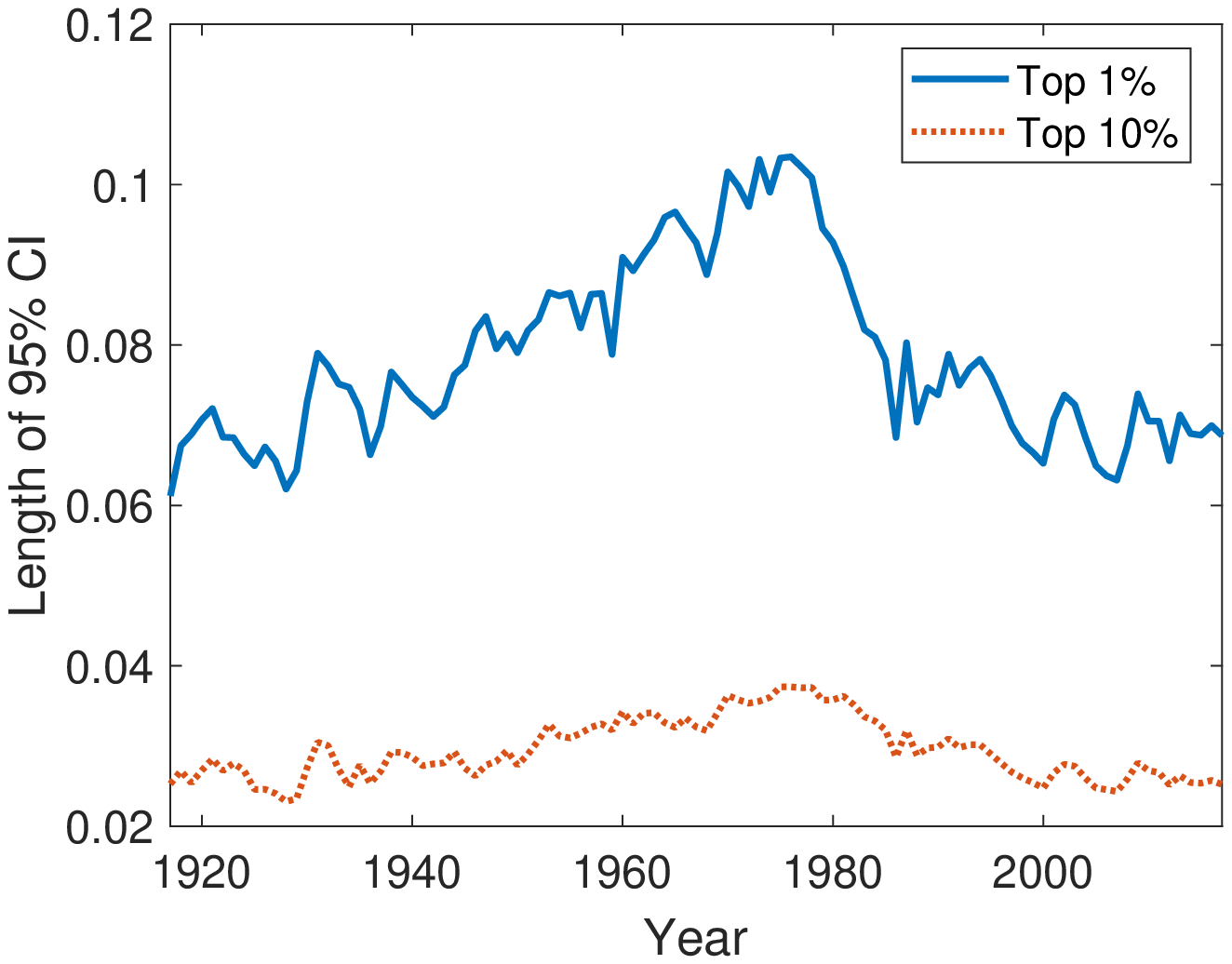}
\caption{U.S.}\label{fig:CI95US}
\end{subfigure}
\begin{subfigure}{0.48\linewidth}
\includegraphics[width=\linewidth]{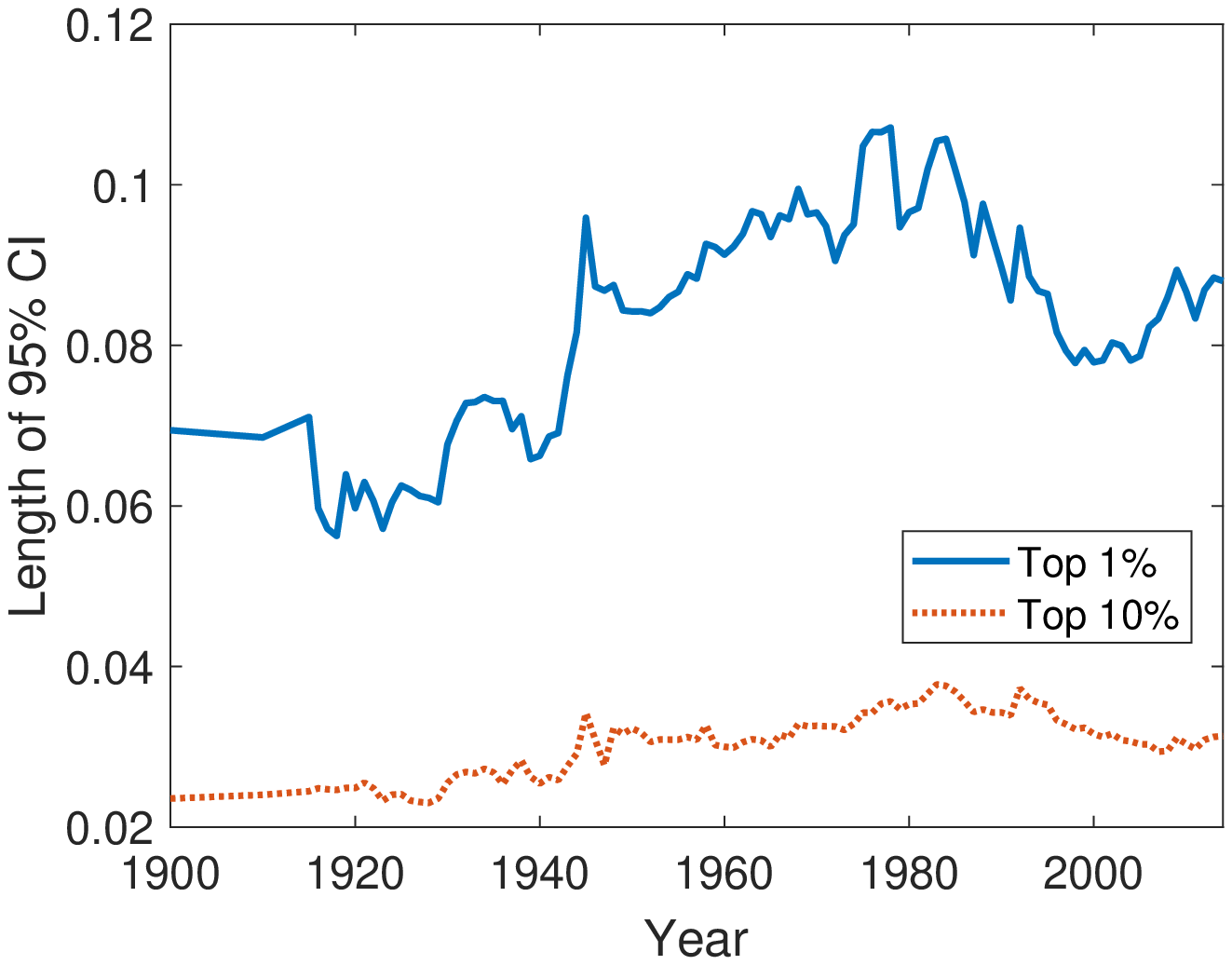}
\caption{France.}\label{fig:CI95Fr}
\end{subfigure}
\caption{Length of conservative 95\% confidence intervals of Pareto exponents.}\label{fig:CI95}
\end{figure}

With the second approach, consider $\set{\widehat{\alpha}_1,\dots,\widehat{\alpha}_L}$ as $L$ estimators of $\alpha$ using $L$ independent samples, where the sample sizes can be heterogeneous. Suppose the estimators satisfy asymptotic normality such that for all $l=1,\dots,L$, we have
\begin{equation*}
\sqrt{n_l}( \widehat{\alpha}_l-\alpha ) \dto N( 0,\sigma _l^{2})
\end{equation*}
as $n_l\to \infty$, where $n_l$ denotes the sample size of the $l$-th sample. The usual $t$-statistic for the hypothesis testing problem that 
\begin{equation*}
H_{0}:\alpha =\alpha_{0}\quad \text{against}\quad H_{1}:\alpha \neq \alpha_{0}
\end{equation*}
is given by 
$t=\sqrt{L}(\bar{\alpha}-\alpha_{0})/s_{\alpha}$, where 
\begin{equation*}
\bar{\alpha}=\frac{1}{L}\sum_{l=1}^L\widehat{\alpha}_l\quad \text{and} \quad
s_{\alpha}^{2}=\frac{1}{L-1}\sum_{l=1}^L( \widehat{\alpha}_l-\bar{\alpha}) ^{2}.
\end{equation*}
We reject the null hypothesis if $\abs{t}$ is larger than some critical value.

If $\sigma_l=\sigma$ for all $l$, the critical value is approximated by the quantile of the Student-$t$ distribution with $L-1$ degrees of freedom. If $\sigma_l$ is not homogeneous, Theorem 1 in \cite{IbragimovMueller2010} (which is due to \cite{BakirovSzekely2006}) establishes that the above $t$-test with the same critical values based on the Student-$t$ distribution is still valid but conservative. More precisely, as long as the significant level $v$ is less than $0.08$ and $L \ge 2$, we have
\begin{equation*}
\lim_{n\to \infty }P( \abs{t}
>F^{-1}_{t,L-1}(v/2) \mid H_{0}) \le v,
\end{equation*}
where $F^{-1}_{t,L-1}(v/2)$ denotes the $1-v/2$ quantile of the Student-$t$ distribution with $L-1$ degrees of freedom. Thus, an asymptotically conservative confidence interval is obtained as 
\begin{equation}
\bar{\alpha}\pm \frac{s_{\alpha }}{\sqrt{L}}F^{-1}_{t,L-1}(v/2).  \label{eq:CI}
\end{equation}
This confidence interval covers the true $\alpha$ with at least $1-v$ probability asymptotically.\footnote{\label{fn:IbragmimovMueller2010}\citet[Section 2.3]{IbragimovMueller2010} also examine the size property of the $t$-test under weak dependency and find that the test works reasonably well even if $\set{\widehat{\alpha}_1,\dots,\widehat{\alpha}_L}$ are weakly dependent. This may indicate that the $t$-statistic approach is applicable under (weak) dependence in in-sample observations as its applications only require asymptotic independence and normality of group estimators such as the tail index $\widehat{\alpha}_l$.}

Based on the previous result, we can construct the confidence intervals for $\alpha$ using the estimates from different years, which are approximately independent if the dataset for each of them are sufficiently far apart. In particular, we use the CMD estimators and the simple estimators based on \eqref{eq:alphaSpq} from every ten years using postwar top income share data. For example, we construct the confidence interval \eqref{eq:CI} for years that have identical last digit. Table \ref{t:CI} presents the (conservative) 95\% confidence intervals of the Pareto exponent using \eqref{eq:CI}. 
Because the length of confidence intervals in Table \ref{t:CI} is around 0.5, using a conservative estimate of sample size as in Figure \ref{fig:CI95} gives shorter confidence intervals. Besides, our short confidence intervals substantially refine the existing results about the Pareto exponent of income.

\begin{table}[!htb]
\centering
\caption{Conservative 95\% confidence intervals of Pareto exponents.}\label{t:CI}
\begin{tabular}{ccccc}
\toprule
& \multicolumn{2}{c}{U.S.} & \multicolumn{2}{c}{France} \\
Year & CMD 1\% & Simple 0.1/1
& CMD 1\% & Simple 0.1/1 \\ 
\cmidrule(lr){1-1}
\cmidrule(lr){2-3}
\cmidrule(lr){4-5}
XXX0 & $(1.53, 2.09)$ & $(1.64, 1.88)$  & $(1.83, 2.11)$ & $(1.74, 1.96)$ \\ 
XXX1 & $(1.61, 2.05)$ & $(1.61, 1.88)$  & $(1.80, 2.09)$ & $(1.71, 1.96)$ \\ 
XXX2 & $(1.57, 2.03)$ & $(1.62, 1.92)$  & $(1.85, 2.15)$ & $(1.77, 2.00)$ \\ 
XXX3 & $(1.61, 2.09)$ & $(1.60, 1.92)$  & $(1.84, 2.19)$ & $(1.75, 2.04)$ \\ 
XXX4 & $(1.57, 2.07)$ & $(1.56, 1.90)$  & $(1.82, 2.19)$ & $(1.75, 2.03)$ \\ 
XXX5 & $(1.52, 2.11)$ & $(1.54, 1.89)$  & $(1.80, 2.27)$ & $(1.76, 2.07)$ \\
XXX6 & $(1.50, 2.01)$ & $(1.54, 1.87)$  & $(1.83, 2.21)$ & $(1.79, 2.01)$ \\ 
XXX7 & $(1.53, 2.03)$ & $(1.54, 1.89)$  & $(1.81, 2.18)$ & $(1.78, 1.95)$ \\ 
XXX8 & $(1.51, 2.03)$ & $(1.57, 1.84)$  & $(1.84, 2.25)$ & $(1.75, 1.97)$ \\ 
XXX9 & $(1.56, 1.99)$ & $(1.60, 1.90)$  & $(1.86, 2.12)$ & $(1.77, 1.94)$ \\ 
\bottomrule
\end{tabular}
\caption*{\footnotesize Note: ``Year XXX$d$'' denotes the years with last digit $d$, \eg, $\set{1946, 1956, \dots, 2016}$ if $d=6$.  Other entries are the $95\%$ confidence intervals of the Pareto exponent using (\ref{eq:CI}). The left columns are based on the CMD estimator with top 1\% shares, and the right columns are based on the simple estimator (\ref{eq:alphaSpq}). See the main text for details of the two estimators.}
\end{table}

How should a researcher choose between the two inference approaches? We generally recommend the CMD approach based on \eqref{eq:CUMDE_lim} if the sample size $n$ is observed or at least known to be larger than some threshold, say $10^6$. Since this approach uses cross-sectional data, we can conduct inference for $\alpha$ in each year. If $n$ is completely unknown, the method by \cite{IbragimovMueller2010,IbragimovMueller2016} is the only viable alternative. Note that this method essentially requires a panel data, where we use the cross-sectional data to construct $\widehat{\alpha}_l$ in the $l$-th year and then implement the $t$-test using these estimates. In our situation, these two approaches are based on different data and hence their results are not directly comparable. In particular, the approach of \cite{IbragimovMueller2010,IbragimovMueller2016} serves as a robustness check, which leads to substantially longer confidence intervals since it relies on the estimates in only 10 years. Furthermore, this method requires the constancy of the Pareto exponent $\alpha$, which is questionable as seen in Figures \ref{fig:Pareto} and \ref{fig:ParetoFr}.

\section{Conclusion}

This paper develops an efficient minimum distance estimator of the Pareto exponent when only top income shares data are available. This is especially relevant in studying income inequality since individual level data for the top rich people are usually unavailable due to confidentiality concerns. Our estimator is consistent and asymptotically normal, and performs excellently in finite samples as shown by Monte Carlo simulations. In particular, we recommend using only top 1 instead of 10 percentile shares to study the tail of the income distribution. We estimate the Pareto exponent to be around 1.5 and stable since 1985 in U.S., and is around 1.5 and 2 before and after WWII in France.

\newpage

\appendix
\section{Proofs}\label{sec:proof}

\begin{proof}[\bf Proof of Lemma \ref{lem:StiglerPareto}]
Using \eqref{eq:muJF}, \eqref{eq:FinvPareto}, and the change of variable $v=1-x$, we obtain
\begin{align*}
\mu(J,F)&=\int_0^1 J(x)F^{-1}(x)\diff x=\int_{1-q}^{1-p} c(1-x)^{-1/\alpha}\diff x\\
&=\int_p^qcv^{-\xi}\diff v=c\frac{q^{1-\xi}-p^{1-\xi}}{1-\xi},
\end{align*}
which is \eqref{eq:mupq}. To prove \eqref{eq:sigpq}, using symmetry, \eqref{eq:sigJF}, \eqref{eq:fFinvPareto}, and the change of variable $v_i=1-x_i$, we obtain
\begin{align*}
&\sigma^2(J,F)\\
&=2\int_{0\le x_1\le x_2\le 1} \frac{J(x_1)J(x_2)}{f(F^{-1}(x_1))f(F^{-1}(x_2))}(\min\set{x_1,x_2}-x_1x_2)\diff x_1\diff x_2\\
&=2\int_{0\le x_1\le x_2\le 1} \frac{J(x_1)J(x_2)}{f(F^{-1}(x_1))f(F^{-1}(x_2))}x_1(1-x_2)\diff x_1\diff x_2\\
&=\frac{2c^2}{\alpha^2}\int_{p\le v_2\le v_1\le q}\frac{1}{v_1^{1+1/\alpha}v_2^{1+1/\alpha}}(1-v_1)v_2\diff v_1\diff v_2\\
&=2c^2\xi^2\int_p^q\int_p^{v_1}\frac{1-v_1}{v_1^{1+\xi}}v_2^{-\xi}\diff v_2\diff v_1\\
&=\frac{2c^2\xi^2}{1-\xi}\int_p^q\frac{1-v_1}{v_1^{1+\xi}}(v_1^{1-\xi}-p^{1-\xi})\diff v_1\\
&=\frac{2c^2\xi^2}{1-\xi}\int_p^q(v^{-2\xi}-v^{1-2\xi}-p^{1-\xi}v^{-1-\xi}+p^{1-\xi}v^{-\xi})\diff v\\
&=\frac{2c^2\xi^2}{1-\xi}\left(\frac{q^{1-2\xi}-p^{1-2\xi}}{1-2\xi}-\frac{q^{2-2\xi}-p^{2-2\xi}}{2-2\xi}+p^{1-\xi}\frac{q^{-\xi}-p^{-\xi}}{\xi}+p^{1-\xi}\frac{q^{1-\xi}-p^{1-\xi}}{1-\xi}\right)\\
&=\frac{2c^2\xi^2}{1-\xi}\left(\frac{q^{1-2\xi}-p^{1-2\xi}}{1-2\xi}+p^{1-\xi}\frac{q^{-\xi}-p^{-\xi}}{\xi}+\frac{2p^{1-\xi}q^{1-\xi}-p^{2-2\xi}-q^{2-2\xi}}{2-2\xi}\right),
\end{align*}
where $\frac{q^{1-2\xi}-p^{1-2\xi}}{1-2\xi}=\log \frac{q}{p}$ if $\xi=1/2$.
\end{proof}

\begin{proof}[\bf Proof of Lemma \ref{lem:Sigma}]
The formula for $\Sigma_{kk}$ follows from Lemma \ref{lem:StiglerPareto}. Suppose $j<k$ and let $v$ be the asymptotic variance of $\bar{Y}_j+\bar{Y}_k$. On the one hand, we have
$$v=\sigma_j^2+\sigma_k^2+2\Sigma_{jk}.$$
On the other hand, noting that $\bar{Y}_j+\bar{Y}_k$ is asymptotically equivalent as $L_n$ in Lemma \ref{lem:Stigler} with
$$J(x)=1[1-p_{j+1}<x\le 1-p_j]+1[1-p_{k+1}<x\le 1-p_k],$$
it follows from the proof of Lemma \ref{lem:StiglerPareto} that
$$v=I_{jj}+I_{jk}+I_{kj}+I_{kk},$$
where
\begin{equation}
I_{jk}=c^2\xi^2\int_{p_j}^{p_{j+1}}\int_{p_k}^{p_{k+1}}\frac{1}{v_1^{1+\xi}v_2^{1+\xi}}(\min\set{1-v_1,1-v_2}-(1-v_1)(1-v_2))\diff v_1\diff v_2.\label{eq:Ijk}
\end{equation}
Clearly we have $I_{jj}=\sigma_j^2$ and $I_{kk}=\sigma_k^2$. By Fubini's theorem, $I_{jk}=I_{kj}$. Therefore $\Sigma_{jk}=I_{jk}$. Since $j<k$ and hence $p_j<p_k$, we obtain
\begin{align*}
\Sigma_{jk}&=c^2\xi^2\int_{p_j}^{p_{j+1}}\int_{p_k}^{p_{k+1}}\frac{1}{v_1^{1+\xi}v_2^{1+\xi}}(1-v_1)v_2\diff v_1\diff v_2\\
&=c^2\xi^2\left(\int_{p_j}^{p_{j+1}}v_2^{-\xi}\diff v_2\right)\left(\int_{p_k}^{p_{k+1}}(v_1^{-1-\xi}-v_1^{-\xi})\diff v_1\right)\\
&=c^2\xi^2\frac{p_{j+1}^{1-\xi}-p_j^{1-\xi}}{1-\xi}\left(-\frac{p_{k+1}^{-\xi}-p_k^{-\xi}}{\xi}-\frac{p_{k+1}^{1-\xi}-p_k^{1-\xi}}{1-\xi}\right)\\
&=-c^2\xi^2\frac{p_{j+1}^{1-\xi}-p_j^{1-\xi}}{1-\xi}\left(\frac{p_{k+1}^{-\xi}-p_k^{-\xi}}{\xi}+\frac{p_{k+1}^{1-\xi}-p_k^{1-\xi}}{1-\xi}\right),
\end{align*}
which is \eqref{eq:sigpq}.

To show that $\Sigma$ is positive definite, noting that $\Sigma_{jk}=I_{jk}=I_{kj}$ and \eqref{eq:Ijk} holds, we have
$$\Sigma_{jk}=\int_{[p_1,1]^2}\phi_j(v_1)\phi_k(v_2)\frac{\min\set{1-v_1,1-v_2}-(1-v_1)(1-v_2)}{v_1v_2}\diff v_1\diff v_2,$$
where $\phi_j(v)=c\xi v^{-\xi}1[p_j\le v<p_{j+1}]$. Take any vector $z=(z_1,\dots,z_K)^\top\in \R^K$. Then as in the proof of Lemma \ref{lem:StiglerPareto}, we obtain
\begin{align*}
z^\top \Sigma z&=2\sum_{j,k}z_jz_k\int_{p_1\le v_2\le v_1\le 1}\phi_j(v_1)\phi_k(v_2)\frac{1-v_1}{v_1}\diff v_1\diff v_2\\
&=2\sum_{j,k}z_jz_k\int_{p_1}^1\int_{p_1}^{v_1}\phi_j(v_1)\phi_k(v_2)\frac{1-v_1}{v_1}\diff v_2\diff v_1\\
&=2\int_{p_1}^1\int_{p_1}^{v_1}\phi(v_1)\phi(v_2)\frac{1-v_1}{v_1}\diff v_2\diff v_1,
\end{align*}
where $\phi(v)=\sum_{k=1}^Kz_k\phi_k(v)$. Since $\phi$ is piece-wise continuous, we can take an absolutely continuous primitive function $\Phi=\int \phi$ such that $\Phi(p_1)=0$. By the fundamental theorem of calculus, we obtain
$$z^\top \Sigma z=2\int_{p_1}^1\phi(v)\Phi(v)\frac{1-v}{v}\diff v.$$
Let $I$ be the integral ignoring the factor 2. Using integration by parts, we obtain
\begin{align*}
I&=\int_{p_1}^1\phi(v)\Phi(v)\frac{1-v}{v}\diff v=\int_{p_1}^1\Phi'(v)\Phi(v)\frac{1-v}{v}\diff v\\
&=\left[\Phi(v)^2\frac{1-v}{v}\right]_{p_1}^1-\int_{p_1}^1\Phi(v)\left(\Phi'(v)\frac{1-v}{v}-\frac{\Phi(v)}{v^2}\right)\diff v\\
&=-I+\int_{p_1}^1\frac{\Phi(v)^2}{v^2}\diff v\\
\iff &z^\top \Sigma z=2I=\int_{p_1}^1\frac{\Phi(v)^2}{v^2}\diff v\ge 0,
\end{align*}
so $\Sigma$ is positive semidefinite. Since $\Phi$ is continuous, equality holds if and only if $\Phi\equiv 0\iff z=0$. Therefore $\Sigma$ is positive definite.
\end{proof}

\begin{proof}[\bf Proof of Proposition \ref{prop:normshares}]
Let $Z=(Z_1,\dots,Z_K)^\top\sim N(0,\Sigma)$. Since $\bar{Y}_k\pto \mu_k$ and $\sqrt{n}(\bar{Y}_k-\mu_k)\dto Z_k$ by \eqref{eq:multnormal}, using the definition of $S_k$, $R_k$, and $\bar{Y}_k$, we obtain
\begin{align*}
\sqrt{n}(\bar{s}_k-r_k)&=\sqrt{n}\left(\frac{S_{k+1}-S_k}{S_{K+1}-S_K}-\frac{\mu_k}{\mu_K}\right)=\sqrt{n}\left(\frac{n\bar{Y}_k/\sum Y_i}{n\bar{Y}_K/\sum  Y_i}-\frac{\mu_k}{\mu_K}\right)\\
&=\sqrt{n}\left(\frac{\bar{Y}_k}{\bar{Y}_K}-\frac{\mu_k}{\mu_K}\right)=\frac{1}{\bar{Y}_K}\sqrt{n}(\bar{Y}_k-\mu_k)-\frac{\mu_k}{\mu_K\bar{Y}_K}\sqrt{n}(\bar{Y}_K-\mu_K)\\
&\dto \frac{1}{\mu_K}Z_k-\frac{\mu_k}{\mu_K^2}Z_K.
\end{align*}
Expressing this in matrix form, we obtain
$$\sqrt{n}(\bar{s}-r)\dto HZ\sim N(0,H\Sigma H^\top).$$
Since by Lemma \ref{lem:StiglerPareto} each $\mu_k$ is proportional to $c$ and each element of $\Sigma$ is proportional to $c^2$, the vector $r$ and matrix $\Omega=H\Sigma H^\top$ depend only on $\alpha$. Since $\Sigma$ is positive definite by Lemma \ref{lem:Sigma} and $H$ has full row rank, $\Omega$ is also positive definite.
\end{proof}

\begin{proof}[\bf Proof of Proposition \ref{prop:identify}]
We prove the contrapositive. Let $\xi=1/\alpha$ and $\xi_0=1/\alpha_0$. If $r(\alpha)=r(\alpha_0)$, using $r_k=\mu_k/\mu_K$ and \eqref{eq:mupq}, in particular
\begin{align*}
r_{K-1}(\alpha)=r_{K-1}(\alpha_0)& \iff \frac{p_K^{1-\xi}-p_{K-1}^{1-\xi}}{p_{K+1}^{1-\xi}-p_K^{1-\xi}}=\frac{p_K^{1-\xi_0}-p_{K-1}^{1-\xi_0}}{p_{K+1}^{1-\xi_0}-p_K^{1-\xi_0}}\\
&\iff \frac{1-a^{1-\xi}}{b^{1-\xi}-1}=\frac{1-a^{1-\xi_0}}{b^{1-\xi_0}-1},
\end{align*}
where $a=p_{K-1}/p_K<1$ and $b=p_{K+1}/p_K>1$. By Lemma \ref{lem:monotone} below, the left-hand side is monotone in $\xi\in (0,1)$. Therefore $\xi=\xi_0$ and hence $\alpha=\alpha_0$.
\end{proof}

\begin{lem}\label{lem:monotone}
Let $a,b>0$, $a,b\neq 1$, and $a\neq b$. Then $f(x)=\frac{a^x-1}{b^x-1}$ is either strictly increasing or decreasing in $x>0$.
\end{lem}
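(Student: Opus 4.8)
The plan is to show that the logarithmic derivative of $f$ has constant sign on $(0,\infty)$. First I would note that, because $a,b\neq 1$, each factor $a^x-1$ and $b^x-1$ keeps a constant sign for all $x>0$ (the sign of $a-1$ and of $b-1$ respectively), so $f$ is of constant sign and never vanishes on $(0,\infty)$. Consequently $\log\abs{f}$ is well defined up to the constant sign, and it suffices to prove that $(\log\abs{f})'$ never changes sign; strict monotonicity of $f$ then follows.

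Writing $u=\log a$ and $v=\log b$ (both nonzero, with $u\neq v$ since $a\neq b$), a direct differentiation gives
$$(\log\abs{f(x)})'=\frac{a^x\log a}{a^x-1}-\frac{b^x\log b}{b^x-1}=\frac{1}{x}\left(\rho(ux)-\rho(vx)\right),$$
where I set $\rho(t)=\dfrac{t e^{t}}{e^{t}-1}$ for $t\neq 0$, with the removable value $\rho(0)=1$. The crucial structural observation is that the \emph{same} auxiliary function $\rho$ appears, evaluated at the two arguments $ux$ and $vx$.

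The main step is to establish that $\rho$ is strictly increasing on all of $\R$. Differentiating yields
$$\rho'(t)=\frac{e^{t}\left(e^{t}-1-t\right)}{(e^{t}-1)^{2}},$$
and since $e^{t}>1+t$ for every $t\neq 0$, the factor $e^{t}-1-t$ is positive; hence $\rho'(t)>0$ for all $t\neq 0$, and $\rho$ extends continuously and strictly increasingly across the origin. This is the only part requiring any computation, and it reduces entirely to the elementary inequality $e^{t}>1+t$, so I do not expect a genuine obstacle here.

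Finally, for each fixed $x>0$ we have $ux-vx=x(u-v)$, so $ux$ and $vx$ are ordered according to the sign of $u-v$, which is the same for every $x>0$. By strict monotonicity of $\rho$, the difference $\rho(ux)-\rho(vx)$ therefore carries the constant sign of $u-v$ on $(0,\infty)$, whence $(\log\abs{f})'$ has constant sign and $f$ is strictly monotone. The one subtlety to keep in mind is that when $a<1<b$ the arguments $ux$ and $vx$ sit on opposite sides of the removable singularity of $\rho$ at $0$; this is precisely why it matters that $\rho$ is increasing on the whole line rather than merely on each half-line.
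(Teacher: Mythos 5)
Your proof is correct, and it takes a genuinely different route from the paper's. The paper differentiates $f$ directly and rewrites $f'(x)$ so that the problematic ratio $\frac{a^x-1}{b^x-1}$ can be replaced, via Cauchy's mean value theorem applied to $g_1(x)=a^x$ and $g_2(x)=b^x$, by $\frac{a^y\log a}{b^y\log b}$ for some $y\in(0,x)$; the sign of $f'(x)$ then reduces to the sign of $(a/b)^x-(a/b)^y$ together with the signs of $\log a$ and $b^x-1$. You instead pass to the logarithmic derivative and exploit the structural identity $(\log\abs{f(x)})'=\frac{1}{x}\bigl(\rho(ux)-\rho(vx)\bigr)$ with $u=\log a$, $v=\log b$ and $\rho(t)=\frac{te^t}{e^t-1}=\frac{t}{1-e^{-t}}$, reducing everything to the single elementary inequality $e^t>1+t$ for $t\neq 0$, which gives $\rho'>0$. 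Your computation of $\rho'$ checks out, and you correctly flag the only delicate point: when $a<1<b$ the arguments $ux$ and $vx$ straddle the removable singularity at $0$, so you genuinely need $\rho$ increasing on all of $\R$ (continuity at $0$ plus strict increase on each half-line suffices). What your approach buys is self-containedness and transparency: there is no auxiliary point $y$ depending on $x$, and the constancy of sign is immediate from $\mathrm{sgn}(ux-vx)=\mathrm{sgn}(u-v)$ for all $x>0$. What the paper's approach buys is brevity — it avoids introducing $\rho$ and the case analysis around its singularity, at the cost of invoking Cauchy's mean value theorem. One small bookkeeping step you state but should make sure to keep in the final write-up: constant sign of $f'/f$ plus constant sign of $f$ (which you established from $a,b\neq 1$) gives constant sign of $f'$, hence strict monotonicity of $f$ itself and not merely of $\abs{f}$.
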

\begin{proof}
By simple algebra, we obtain
$$f'(x)=\frac{a^x\log a(b^x-1)-b^x\log b(a^x-1)}{(b^x-1)^2}=\frac{b^x\log b}{b^x-1}\left(\frac{a^x\log a}{b^x\log b}-\frac{a^x-1}{b^x-1}\right).$$
Applying Cauchy's mean value theorem to $g_1(x)=a^x$ and $g_2(x)=b^x$, there exists $0<y<x$ such that
$$\frac{a^x-1}{b^x-1}=\frac{g_1(x)-g_1(0)}{g_2(x)-g_2(0)}=\frac{g_1'(y)}{g_2'(y)}=\frac{a^y\log a}{b^y\log b}.$$
Therefore
$$f'(x)=\frac{b^x\log b}{b^x-1}\left(\frac{a^x\log a}{b^x\log b}-\frac{a^y\log a}{b^y\log b}\right)=\frac{b^x\log a}{b^x-1}\left(\left(\frac{a}{b}\right)^x-\left(\frac{a}{b}\right)^y\right).$$
Since $a,b>0$, $a,b\neq 1$, $a\neq b$, and $0<y<x$, the sign of $\log a$ depends on $a\gtrless 1$, the sign of $b^x-1$ depends on $b\gtrless 1$, and the sign of $(a/b)^x-(a/b)^y$ depends on $a\gtrless b$. Therefore $f'(x)$ has a constant sign.
\end{proof}

\section{Simulation}\label{sec:sim}

In this appendix we evaluate the finite sample properties of the continuously updated minimum distance estimator \eqref{eq:CUMDE} through simulations.

\subsection{Simulation design}\label{subsec:sim}

We consider three data generating processes (DGPs),
\begin{inparaenum}[(i)]
\item Pareto distribution,
\item absolute value of the Student-$t$ distribution, and
\item double Pareto-lognormal distribution (dPlN).
\end{inparaenum}
For the Pareto distribution, we set the Pareto exponent to $\alpha=2$ and (without loss of generality) the minimum size to $c=1$. For the Student-$t$ distribution, we set the degree of freedom to $\nu=2$ so that the Pareto exponent is 2. The double Pareto-lognormal distribution is the product of independent double Pareto \citep{reed2001} and lognormal variables. dPlN has been documented to fit well to size distributions of economic variables including income \citep{reed2003}, city size \citep{giesen-zimmermann-suedekum2010}, and consumption \citep{Toda2017MD}. \cite{reed-jorgensen2004} show that a dPlN variable $Y$ can be generated as
$$Y=\exp(\mu+\sigma X_1+X_2/\alpha-X_3/\beta),$$
where $X_1,X_2,X_3$ are independent and $X_1\sim N(0,1)$ and $X_2,X_3\sim \mathrm{Exp}(1)$. For parameter values, we set $\mu=0$, $\sigma=0.5$, $\alpha=2$, and $\beta=1$, which are typical values for income data as documented in \cite{Toda2012JEBO}.

The simulation design is as follows. For each DGP, we generate i.i.d.\ samples with size $n=10^4,10^5,10^6$. We set the top percentiles as in \eqref{eq:TopPercent}, which are the numbers reported in \cite{piketty-saez2003}. Because the distribution is not exactly Pareto for DGP 2 (Student-$t$) and 3 (dPlN), we expect that the estimation suffers from model misspecification when we use large top income percentile such as 10\% ($p_6=0.1$). Therefore to evaluate the robustness against model misspecification, we also consider using only the top 5\% group ($p_1$--$p_5$) and the top 1\% group ($p_1$--$p_4$). Thus, in total there are $3^3=27$ specifications (three DGPs, three sample sizes, and three choices of top income percentiles). For each specification, we estimate $\widehat{\alpha}$, construct the confidence interval based on inverting the likelihood ratio test in Proposition \ref{prop:LR}, and implement the specification test in Proposition \ref{prop:spec} using the algorithm in Section \ref{subsec:implement}. The numbers are based on $M=\text{1,000}$ simulations. Table \ref{t:sim} shows the simulation results.

\begin{table}[!htb]
\centering
\caption{Finite sample properties of continuously updated minimum distance estimator.}\label{t:sim}
\begin{tabular}{crrrrrrrrr}
\toprule
DGP & \multicolumn{3}{c}{Pareto} & \multicolumn{3}{c}{$\abs{t}$} & \multicolumn{3}{c}{dPlN}\\
Top\% & 10\% & 5\% & 1\% & 10\% & 5\% & 1\% & 10\% & 5\% & 1\%\\
\cmidrule(lr){1-1}
\cmidrule(lr){2-4}
\cmidrule(lr){5-7}
\cmidrule(lr){8-10}
$n$&\multicolumn{9}{l}{Bias}\\
$10^4$&-0.02&-0.03&-0.04&-0.13&-0.07&-0.06&-0.05&-0.03&-0.04\\
$10^5$&0.00&0.00&0.00&-0.12&-0.04&-0.02&-0.04&-0.01&-0.01\\
$10^6$&0.00&0.00&0.00&-0.11&-0.04&-0.01&-0.04&0.00&0.00\\
\cmidrule(lr){1-1}
\cmidrule(lr){2-4}
\cmidrule(lr){5-7}
\cmidrule(lr){8-10}
$n$&\multicolumn{9}{l}{RMSE}\\
$10^4$&0.08&0.13&0.24&0.15&0.15&0.25&0.09&0.13&0.24\\
$10^5$&0.02&0.04&0.07&0.12&0.06&0.07&0.04&0.04&0.07\\
$10^6$&0.01&0.01&0.02&0.11&0.04&0.03&0.04&0.01&0.02\\
\cmidrule(lr){1-1}
\cmidrule(lr){2-4}
\cmidrule(lr){5-7}
\cmidrule(lr){8-10}
$n$&\multicolumn{9}{l}{Coverage}\\
$10^4$&0.92&0.92&0.92&0.50&0.86&0.90&0.85&0.91&0.90\\
$10^5$&0.96&0.94&0.95&0.00&0.76&0.94&0.59&0.93&0.95\\
$10^6$&0.92&0.95&0.95&0.00&0.04&0.91&0.00&0.92&0.96\\
\cmidrule(lr){1-1}
\cmidrule(lr){2-4}
\cmidrule(lr){5-7}
\cmidrule(lr){8-10}
$n$&\multicolumn{9}{l}{Length}\\
$10^4$&0.28&0.48&0.96&0.27&0.47&0.95&0.28&0.48&0.96\\
$10^5$&0.09&0.15&0.29&0.09&0.15&0.29&0.09&0.15&0.29\\
$10^6$&0.03&0.05&0.09&0.03&0.05&0.09&0.03&0.05&0.09\\
\cmidrule(lr){1-1}
\cmidrule(lr){2-4}
\cmidrule(lr){5-7}
\cmidrule(lr){8-10}
$n$&\multicolumn{9}{l}{Rejection probability}\\
$10^4$&0.04&0.02&0.01&0.02&0.02&0.01&0.03&0.03&0.01\\
$10^5$&0.02&0.01&0.01&0.29&0.02&0.01&0.04&0.01&0.01\\
$10^6$&0.02&0.02&0.02&1.00&0.13&0.02&0.66&0.01&0.01\\

\bottomrule
\end{tabular}
\caption*{\footnotesize Note: Each data genrating process (DGP) has Pareto exponent $\alpha=2$. $\abs{t}$: absolute value of the Student-$t$ distribution. dPlN: double Pareto-lognormal distribution. $n$: sample size. Bias: $\frac{1}{M}\sum_{m=1}^M(\widehat{\alpha}_m-\alpha)$, where $m$ indexes simulations and $M=\text{1,000}$. RMSE: root mean squared error defined by $\sqrt{\frac{1}{M}\sum_{m=1}^M(\widehat{\alpha}_m-\alpha)^2}$. ``Coverage'' is the fraction of simulations for which the true value $\alpha=2$ falls into the 95\% confidence interval. ``Length'' is the average length of confidence intervals across simulations. ``Rejection probability'' is the fraction of simulations for which the specification test in Proposition \ref{prop:spec} rejects.}
\end{table}

We can make a few observations from Table \ref{t:sim}. First, when the model is correctly specified (Pareto), the finite sample properties are excellent. In particular, the coverage rate is close to the nominal value 0.95. In this case, using more top percentiles (including the top 10\%) is more efficient (has smaller bias and RMSE) because it exploits more information. Second, when the model is misspecified (Student-$t$ or dPlN distributions), including large top percentiles (10\%) leads to large bias and incorrect coverage. Thus, it is preferable to use only percentiles within the top 1\% or 5\% for robustness against potential model misspecification. This is seen from the rejection probability of the specification test. Third, when the sample size is large ($n=10^6$, which is typical for administrative data) and we use the top 1\% group, the finite sample properties are good for all distributions considered here.

Because our estimation method is based on asymptotic normality, one may be concerned whether it is a good approximation in finite samples. To address this issue, Figure \ref{fig:sim_kernel} plots the kernel densities of $\widehat{\alpha}$ (normalized by subtracting the true value $\alpha=2$ and dividing by the sample standard deviation) based on $M=\text{1,000}$ simulations. Each figure shows the results for three sample sizes ($n=10^4, 10^5, 10^6$) as well as the standard normal density. Under the Pareto DGP, the distribution of $\widehat{\alpha}$ is very well approximated by the standard normal. Under the other two DGPs, however, when we use the top 10\% shares the distribution of $\widehat{\alpha}$ is centered far away from the true value due to model misspecification. This bias disappears as we include only small top
percentiles (\eg, only top 0--1\%), as we can see from the right panels in Figure \ref{fig:sim_kernel}.

\begin{figure}[!htbp]
\centering
\begin{subfigure}{0.3\linewidth}
\includegraphics[width=\linewidth]{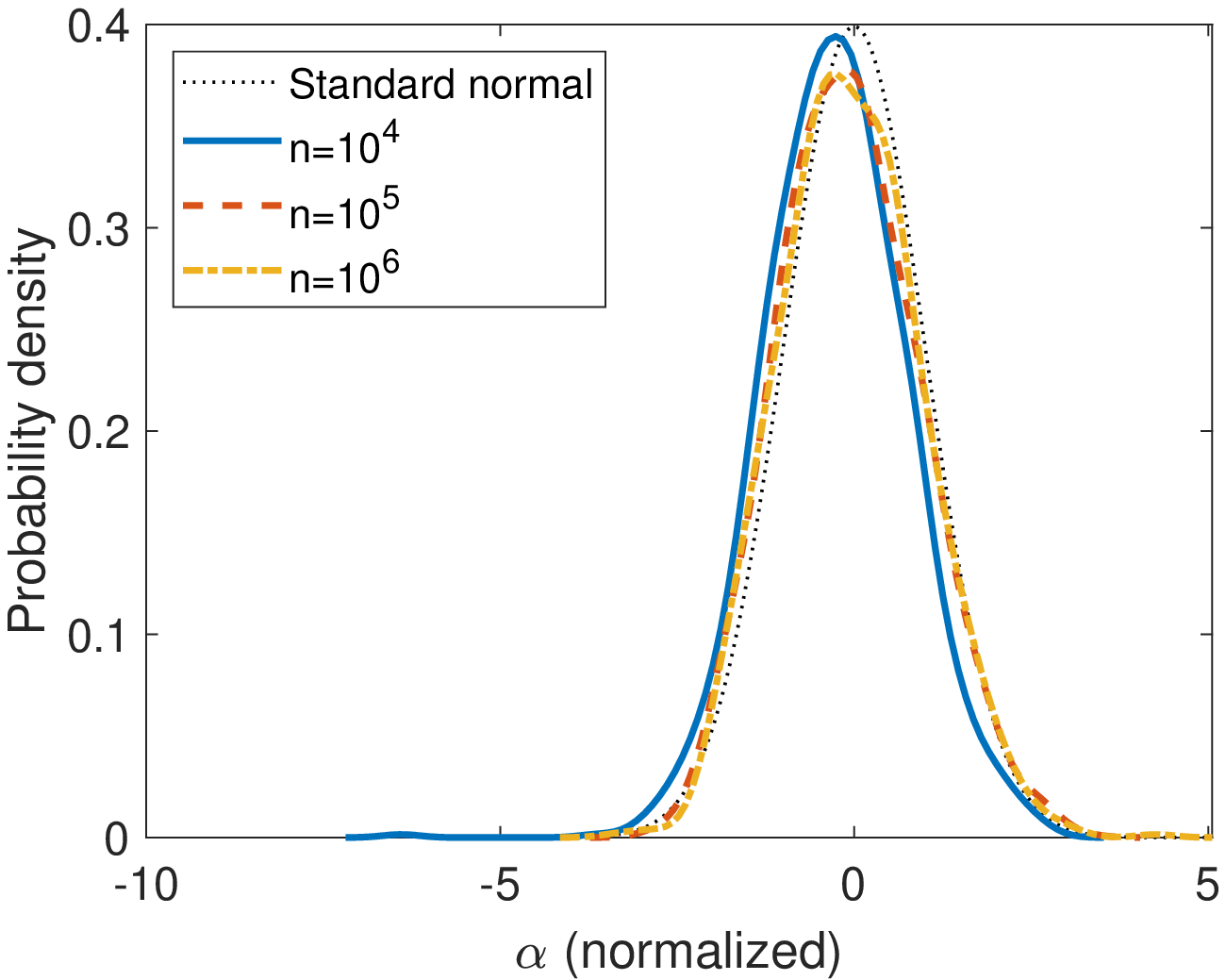}
\caption{Pareto, top 10\%}
\end{subfigure}
\begin{subfigure}{0.3\linewidth}
\includegraphics[width=\linewidth]{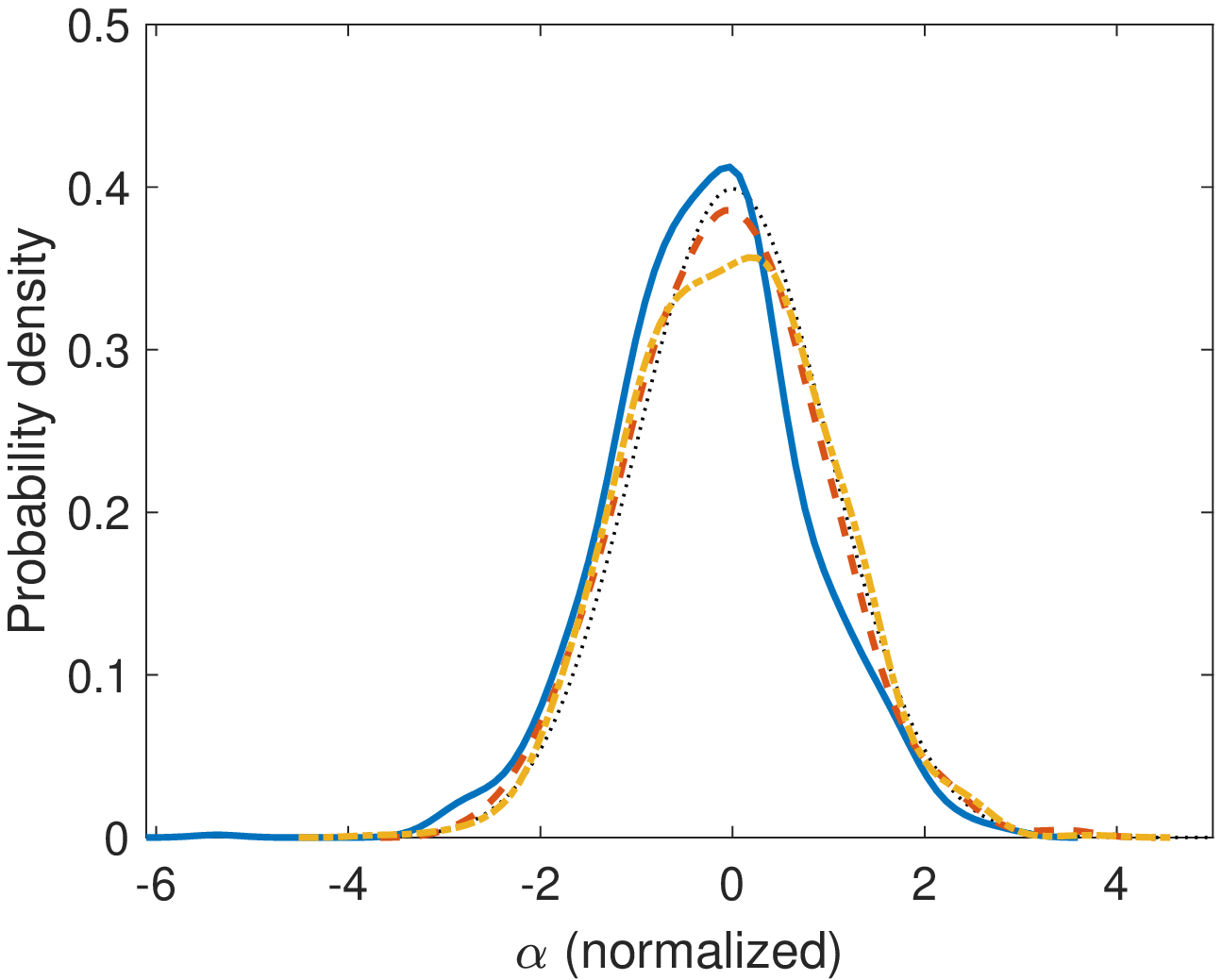}
\caption{Pareto, top 5\%}
\end{subfigure}
\begin{subfigure}{0.3\linewidth}
\includegraphics[width=\linewidth]{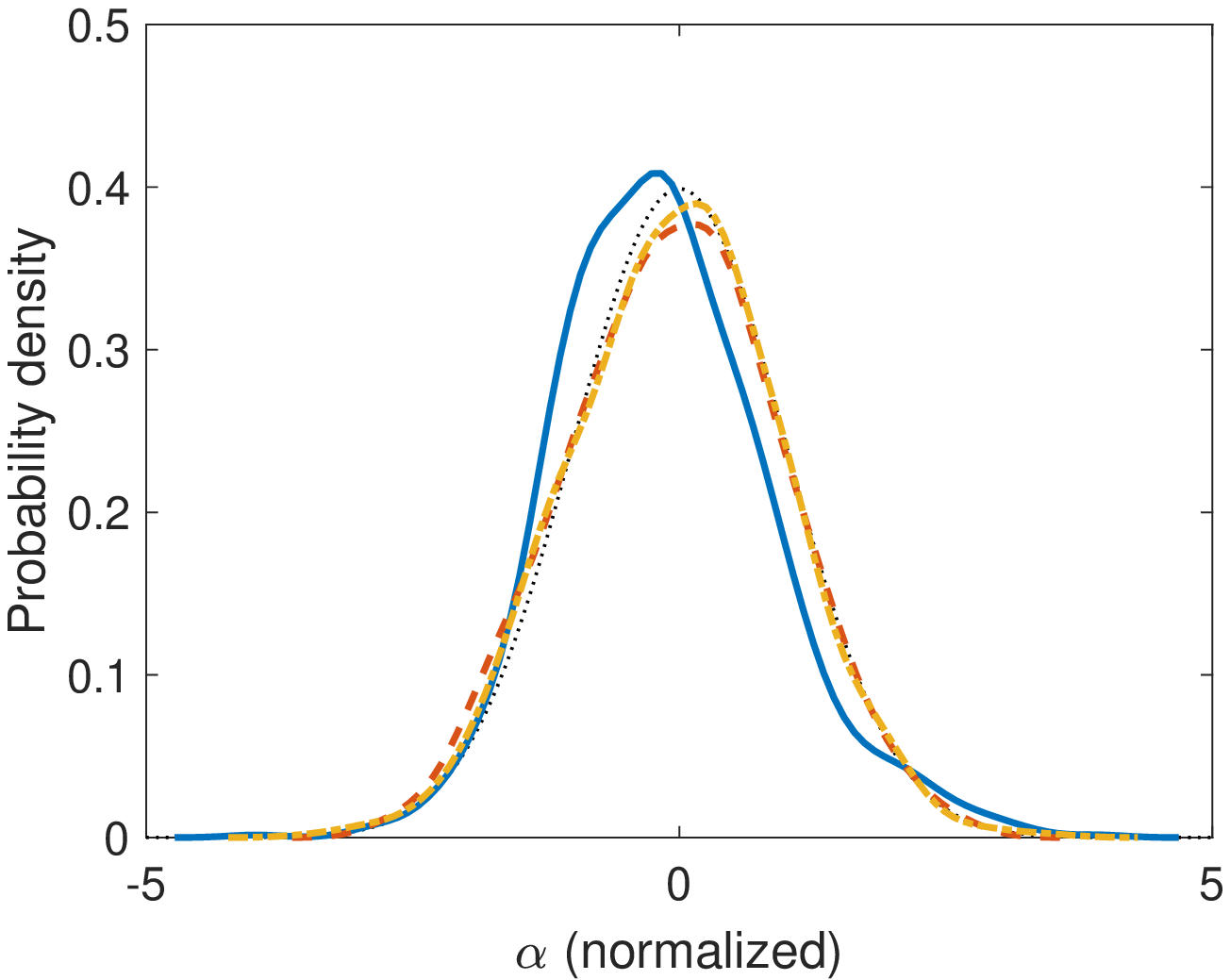}
\caption{Pareto, top 1\%}
\end{subfigure}
\begin{subfigure}{0.3\linewidth}
\includegraphics[width=\linewidth]{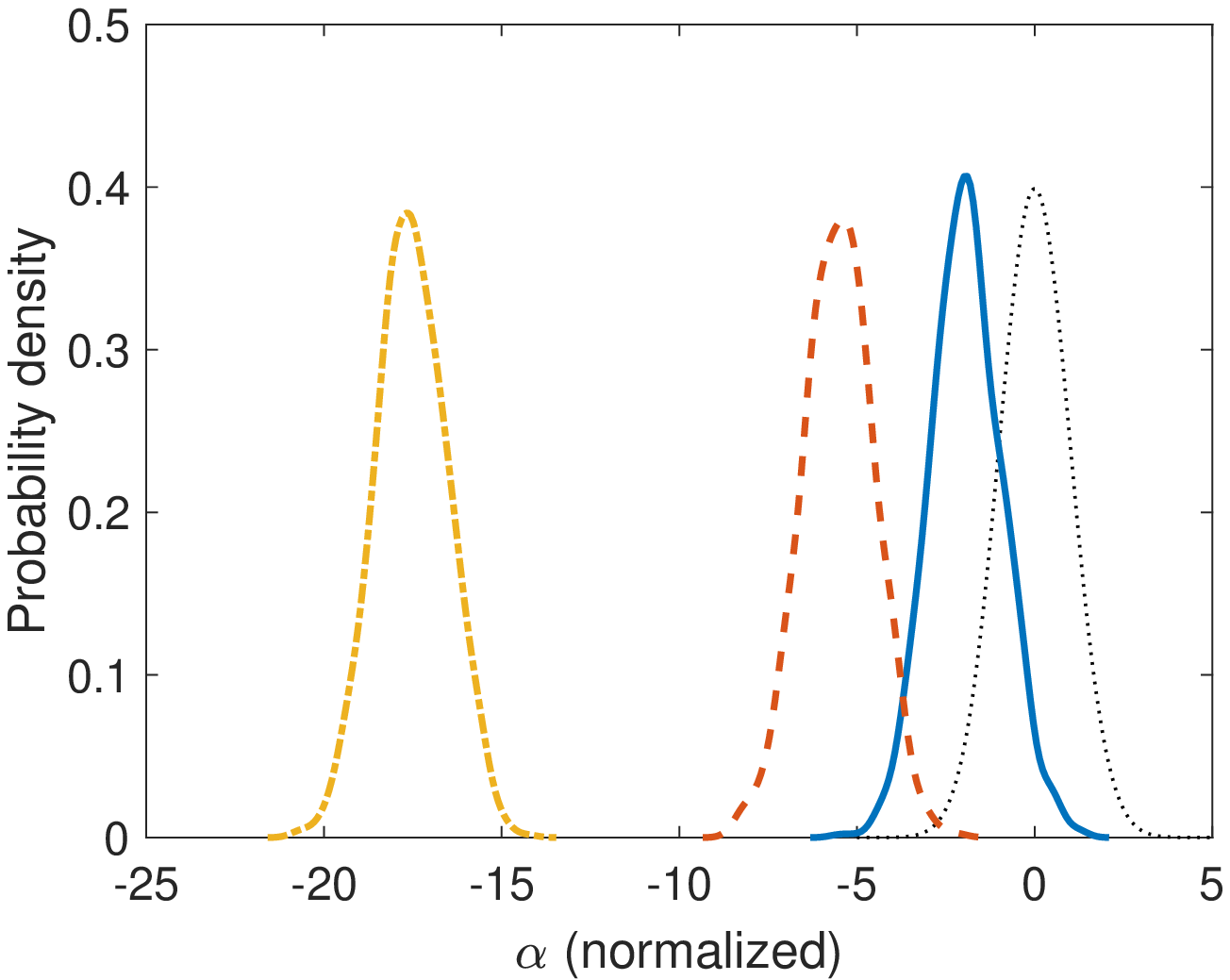}
\caption{$\abs{t}$, top 10\%}
\end{subfigure}
\begin{subfigure}{0.3\linewidth}
\includegraphics[width=\linewidth]{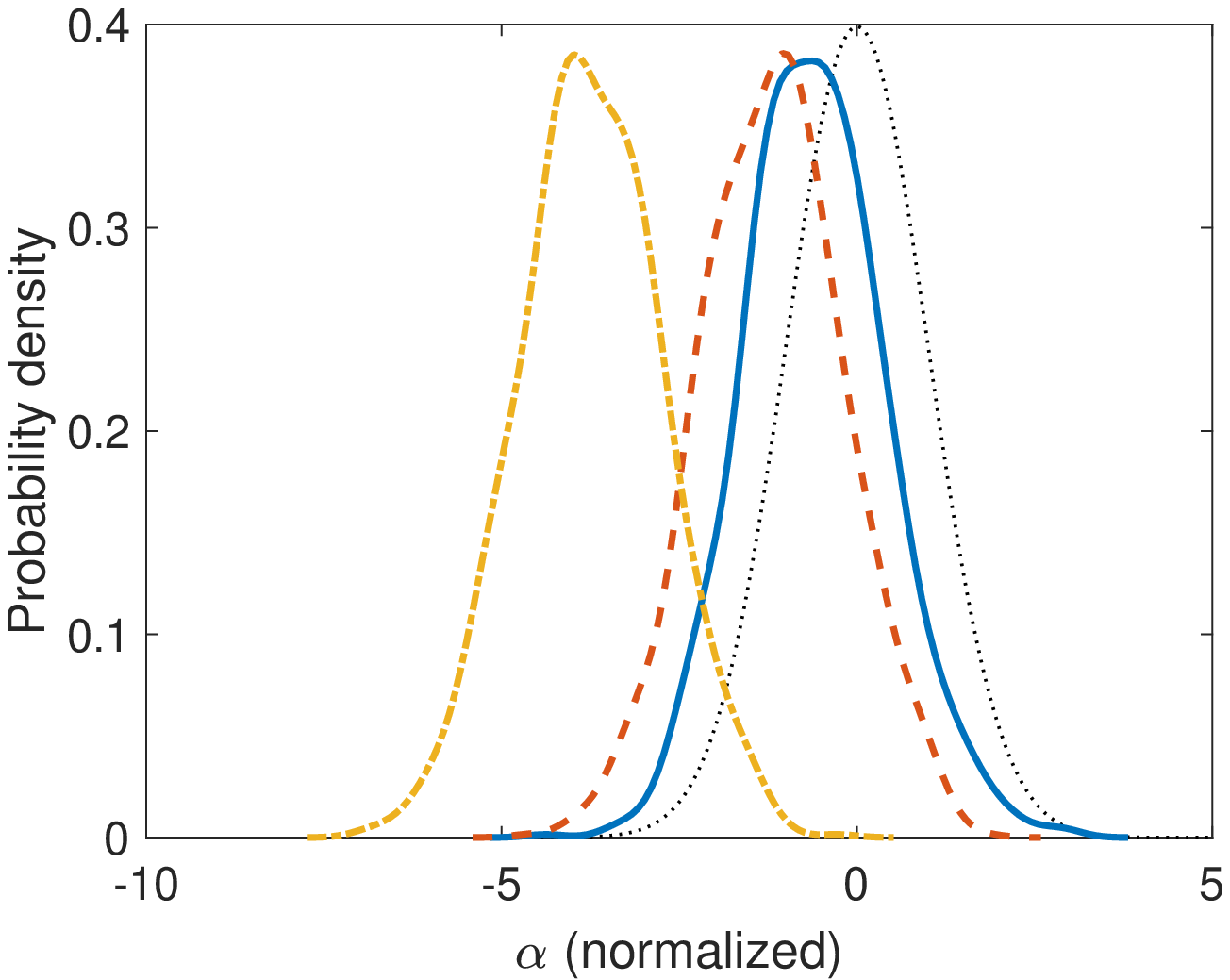}
\caption{$\abs{t}$, top 5\%}
\end{subfigure}
\begin{subfigure}{0.3\linewidth}
\includegraphics[width=\linewidth]{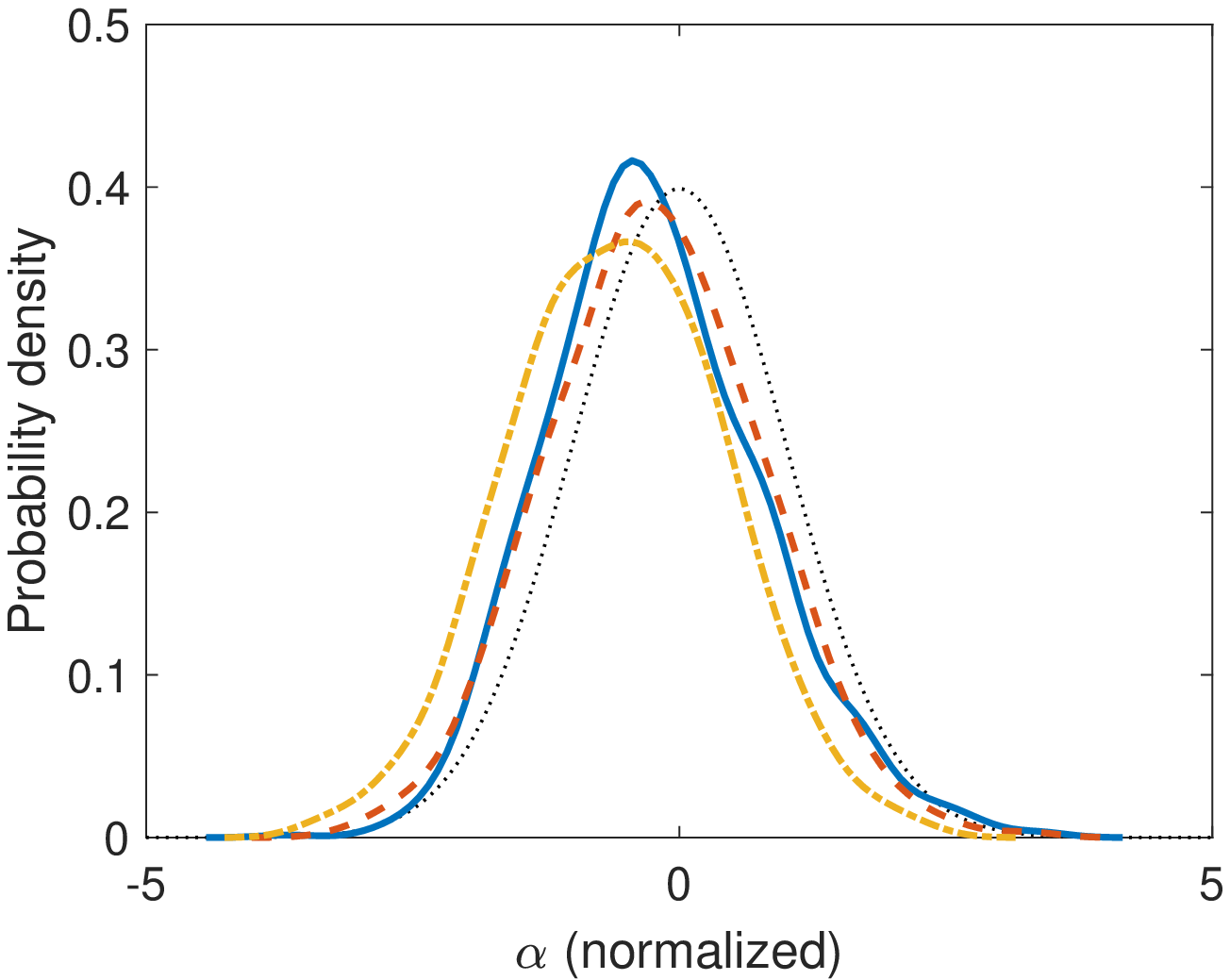}
\caption{$\abs{t}$, top 1\%}
\end{subfigure}
\begin{subfigure}{0.3\linewidth}
\includegraphics[width=\linewidth]{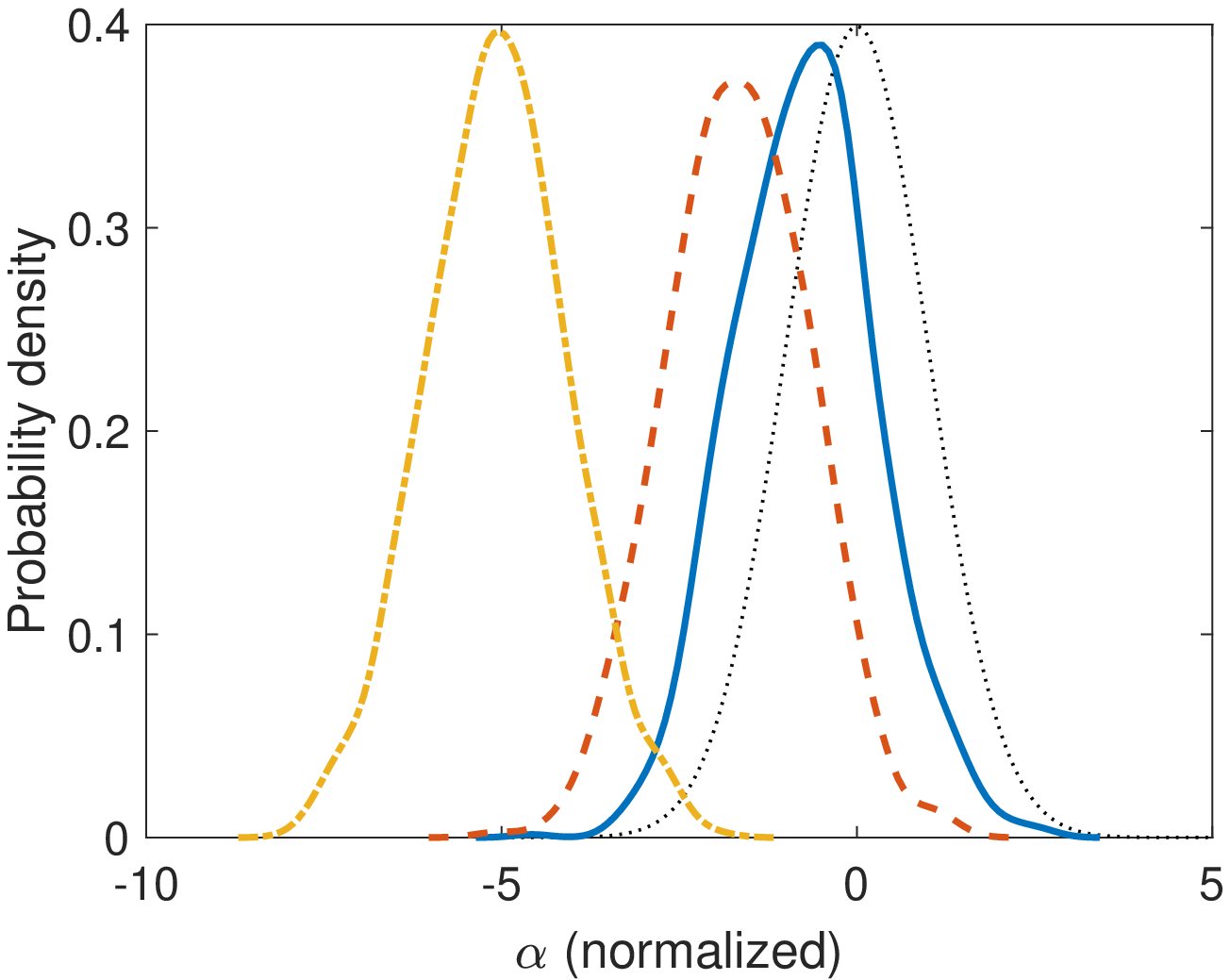}
\caption{dPlN, top 10\%}
\end{subfigure}
\begin{subfigure}{0.3\linewidth}
\includegraphics[width=\linewidth]{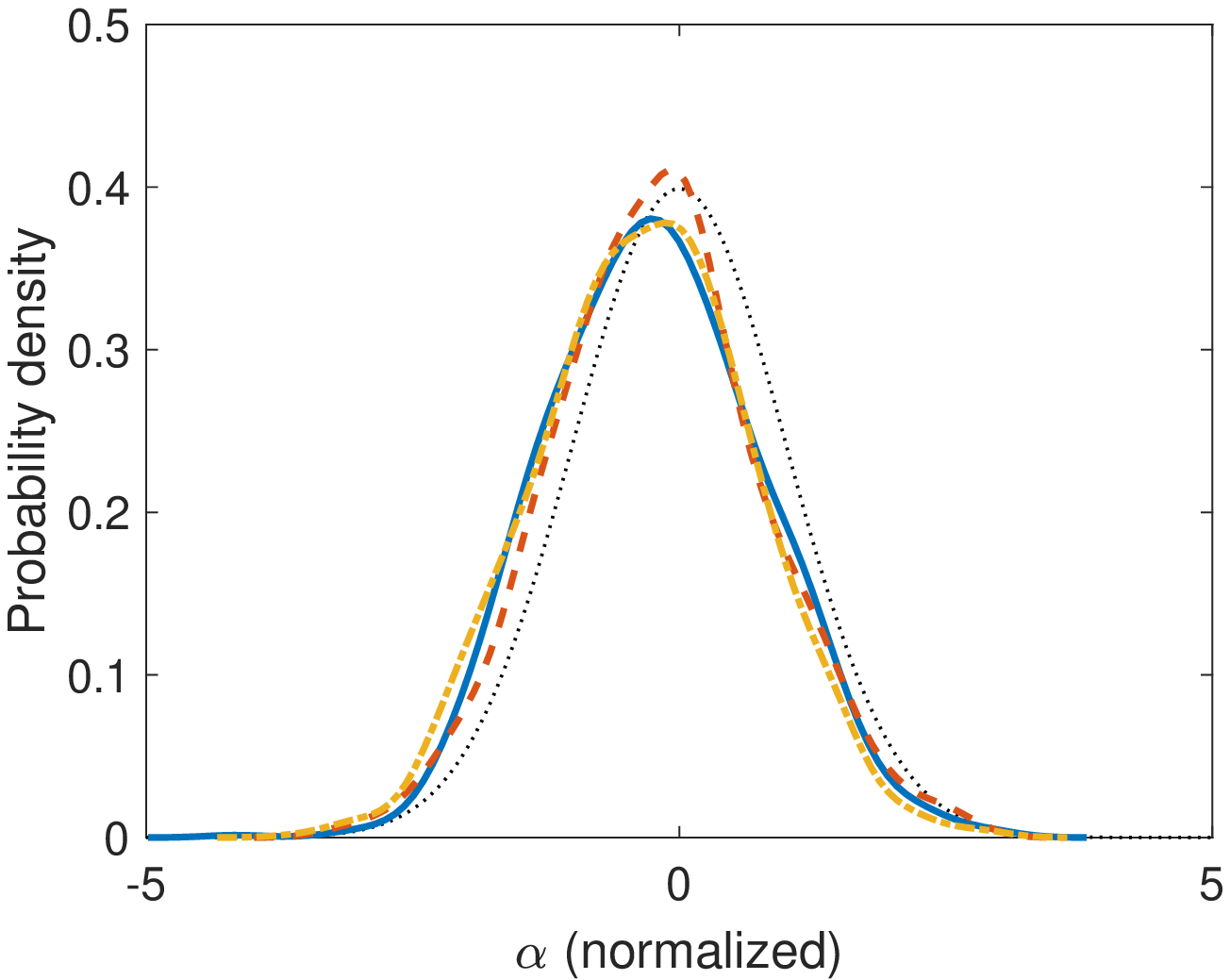}
\caption{dPlN, top 5\%}
\end{subfigure}
\begin{subfigure}{0.3\linewidth}
\includegraphics[width=\linewidth]{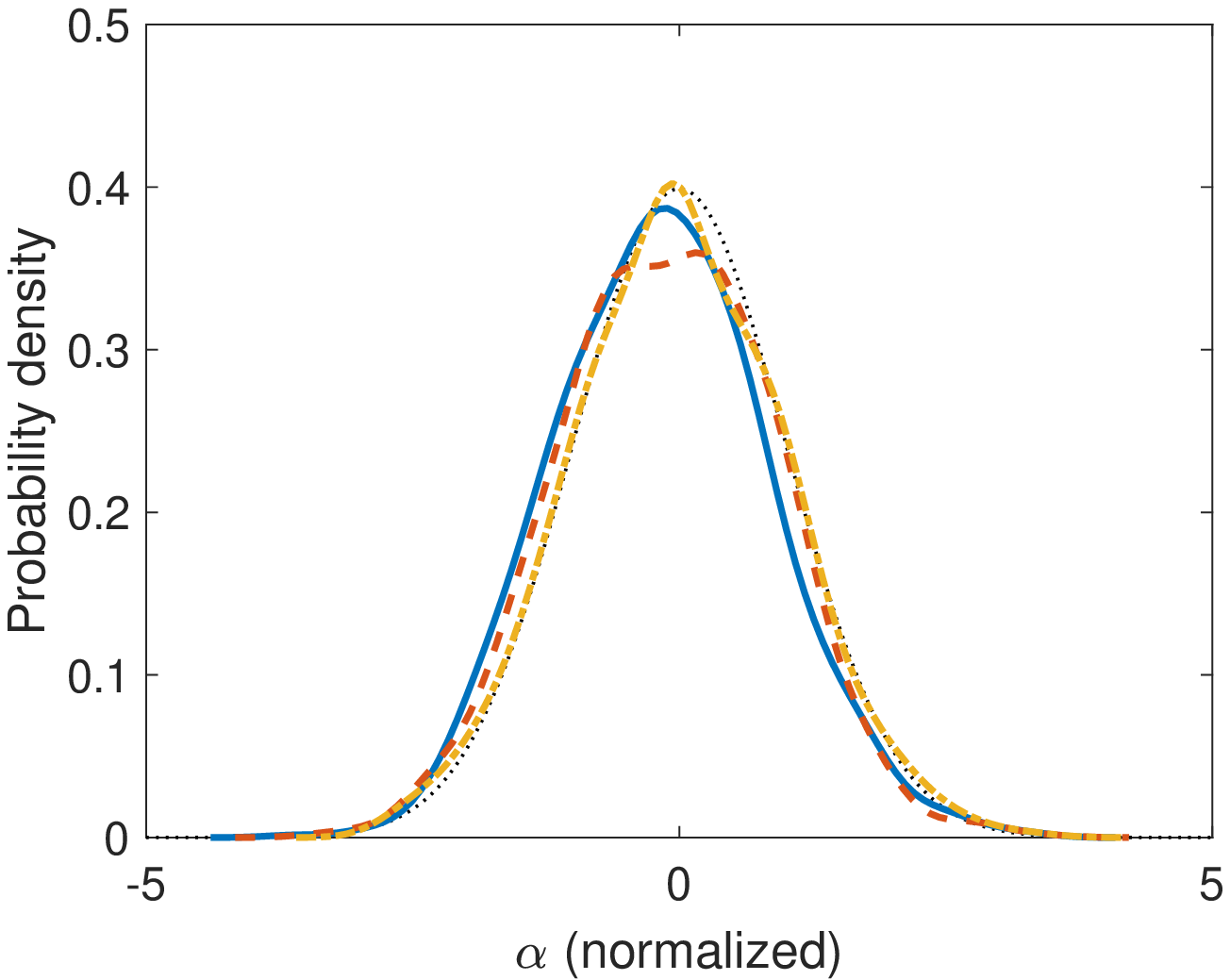}
\caption{dPlN, top 1\%}
\end{subfigure}
\caption{Kernel density of estimated Pareto exponent $\widehat{\alpha}$.}\label{fig:sim_kernel}
\caption*{\footnotesize Note: Each panel shows the kernel density of normalized $\widehat{\alpha}$ (subtracting the true value $\alpha=2$ and dividing by the sample standard deviation). See caption of Table \ref{t:sim} for the simulation design.}
\end{figure}

\subsection{Excluding small top percentiles}
Because our estimation method is based on the asymptotic distribution, one concern is that including a very small top percentile (such as $p_1=0.01\%$) may worsen the finite sample properties. To address this issue, we redo the simulation in Appendix \ref{subsec:sim} but by excluding small top percentiles. Specifically, we consider using only $p_2$--$p_6$, $p_3$--$p_6$, and $p_4$--$p_6$, where the top percentiles are given by \eqref{eq:TopPercent}. Table \ref{t:simadd} shows the results.  Compared with Table \ref{t:sim} using all percentiles ($p_1$--$p_6$, columns labeled ``10\%''), excluding the smallest top percentiles yields similar finite sample properties in terms of bias, RMSE, coverage, and length. However, the rejection probability approaches 0 as we exclude more top percentiles, so the test loses power.

\begin{table}[!htb]
\centering
\caption{Finite sample properties of continuously updated minimum distance estimator excluding small top shares.}\label{t:simadd}
\begin{tabular}{crrrrrrrrr}
	\toprule
	DGP & \multicolumn{3}{c}{Pareto} & \multicolumn{3}{c}{$\abs{t}$} & \multicolumn{3}{c}{dPIN} \\ 
	Top\% & $p_2$--$p_6$ & $p_3$--$p_6$& $p_4$--$p_6$ & $p_2$--$p_6$ & $p_3$--$p_6$ & $p_4$--$p_6$ & $p_2$--$p_6$ &  $p_3$--$p_6$ & $p_4$--$p_6$ \\
	\cmidrule(lr){1-1}
	\cmidrule(lr){2-4}
	\cmidrule(lr){5-7}
	\cmidrule(lr){8-10}
	$n$ & \multicolumn{9}{l}{Bias} \\ 
	$10^4$ & -0.01 & -0.00 & 0.00 & -0.12 & -0.12 & -0.12 & -0.04 & -0.04 & -0.04 \\ 
	$10^5$ & 0.00 & 0.00 & 0.00 & -0.12 & -0.12 & -0.13 & -0.04 & -0.04 & -0.04 \\ 
	$10^6$ & 0.00 & 0.00 & 0.00 & -0.11 & -0.12 & -0.13 & -0.04 & -0.04 & -0.04 \\
	\cmidrule(lr){1-1}
	\cmidrule(lr){2-4}
	\cmidrule(lr){5-7}
	\cmidrule(lr){8-10}
	$n$ & \multicolumn{9}{l}{RMSE} \\ 
	$10^4$ & 0.07 & 0.07 & 0.08 & 0.14 & 0.14 & 0.14 & 0.08 & 0.08 & 0.08 \\ 
	$10^5$ & 0.02 & 0.02 & 0.02 & 0.12 & 0.12 & 0.13 & 0.04 & 0.04 & 0.05 \\ 
	$10^6$ & 0.01 & 0.01 & 0.01 & 0.12 & 0.12 & 0.13 & 0.04 & 0.04 & 0.04 \\
	\cmidrule(lr){1-1}
	\cmidrule(lr){2-4}
	\cmidrule(lr){5-7}
	\cmidrule(lr){8-10}
	$n$ & \multicolumn{9}{l}{Coverage} \\ 
	$10^4$ & 0.94 & 0.95 & 0.96 & 0.55 & 0.58 & 0.57 & 0.90 & 0.92 & 0.91 \\ 
	$10^5$ & 0.95 & 0.95 & 0.95 & 0.00 & 0.00 & 0.00 & 0.61 & 0.63 & 0.58 \\ 
	$10^6$ & 0.92 & 0.92 & 0.92 & 0.00 & 0.00 & 0.00 & 0.00 & 0.00 & 0.00 \\
	\cmidrule(lr){1-1}
	\cmidrule(lr){2-4}
	\cmidrule(lr){5-7}
	\cmidrule(lr){8-10}
	$n$ & \multicolumn{9}{l}{Length} \\ 
	$10^4$ & 0.29 & 0.30 & 0.31 & 0.28 & 0.28 & 0.29 & 0.28 & 0.29 & 0.30 \\ 
	$10^5$ & 0.09 & 0.09 & 0.10 & 0.09 & 0.09 & 0.09 & 0.09 & 0.09 & 0.09 \\ 
	$10^6$ & 0.03 & 0.03 & 0.03 & 0.03 & 0.03 & 0.03 & 0.03 & 0.03 & 0.03 \\
	\cmidrule(lr){1-1}
	\cmidrule(lr){2-4}
	\cmidrule(lr){5-7}
	\cmidrule(lr){8-10}
	$n$ & \multicolumn{9}{l}{Rejection probability} \\ 
	$10^4$ & 0.01 & 0.01 & 0.00 & 0.02 & 0.02 & 0.00 & 0.01 & 0.01 & 0.00 \\ 
	$10^5$ & 0.02 & 0.02 & 0.00 & 0.34 & 0.33 & 0.00 & 0.06 & 0.06 & 0.00 \\ 
	$10^6$ & 0.02 & 0.02 & 0.00 & 1.00 & 1.00 & 0.00 & 0.67 & 0.67 & 0.00 \\
	\bottomrule
\end{tabular}
\caption*{\footnotesize Note: See caption of Table \ref{t:sim} for the simulation design.}
\end{table}

\subsection{Comparison with the simple estimator}\label{subsec:simple}

In this appendix we compare the finite sample performance of our classical minimum distance estimator (CMD) of the Pareto exponent to the simple estimator in \eqref{eq:alphaSpq}. For the simple estimator, we set $(p,q)=(0.1,1)/100$ as is common in the literature, and we also consider $(p,q)=(0.1,0.5)/100, (0.5,1)/100$. For the CMD estimator, to make the results comparable, we use $(p_1,p_2,p_3,p_4)=(0.01,0.1,0.5,1)/100$. Table \ref{t:simple} shows the results. According to the table, the CMD estimator uniformly outperforms the simple estimator in \eqref{eq:alphaSpq} in terms of bias and RMSE.

\begin{table}[!htb]
\centering
\caption{Finite sample properties of continuously updated minimum distance estimator and the simple estimator in \eqref{eq:alphaSpq}.}\label{t:simple}
\begin{tabular}{crrrrrrrr}
	\toprule
	& \multicolumn{4}{c}{Bias} & \multicolumn{4}{c}{RMSE} \\ 
	$100(p,q)$ & CMD & (.1,1) & (.1,.5) & (.5,1) & CMD & (.1,1) & (.1,.5) & (.5,1) \\
	\cmidrule(lr){1-1}
	\cmidrule(lr){2-5}
	\cmidrule(lr){6-9}
	$n$ & \multicolumn{8}{l}{Pareto} \\
	$10^4$ & -0.04 &0.19 &0.24 &0.09& 0.24 &0.44 &0.53 &0.29\\
	$10^5$ & 0.00 &0.03 &0.04 &0.02&0.07 &0.15 &0.18 &0.11\\
	$10^6$ & 0.00 &0.00 &0.00 &0.00&0.02 &0.06 &0.07 &0.04\\
	\cmidrule(lr){1-1}
	\cmidrule(lr){2-5}
	\cmidrule(lr){6-9}
	$n$ & \multicolumn{8}{l}{$\abs{t}$} \\
	$10^4$ & -0.06 &0.17 &0.23 &0.07 &0.25 &0.42 &0.52 &0.28\\
	$10^5$ & -0.02 &0.03 &0.04 &0.01 &0.07 &0.16 &0.18 &0.11\\
	$10^6$ & -0.01 &0.00 &0.00 &-0.01 &0.03 &0.05 &0.06 &0.04\\
	\cmidrule(lr){1-1}
	\cmidrule(lr){2-5}
	\cmidrule(lr){6-9}
	$n$ & \multicolumn{8}{l}{dPlN} \\
	$10^4$ & -0.04& 0.19 &0.25 &0.10 &0.24 &0.43 &0.53 &0.27\\
	$10^5$ & -0.01& 0.03 &0.03 &0.01 &0.07 &0.14 &0.17 &0.10\\
	$10^6$ & 0.00 &0.00 &0.01 &0.00 &0.02 &0.06 &0.07 &0.05\\
	\bottomrule
\end{tabular}
\caption*{\footnotesize Note: See caption of Table \ref{t:sim} for the simulation design. ``CMD'' refers to the continuously updated minimum distance estimator with top 0.01, 0.1, 0.5, 1 percentiles. $100(p,q)$ denotes the top percentiles used in the simple estimator \eqref{eq:alphaSpq}.}
\end{table}


\end{document}